\let\thm@indent\indent}{\let\thm@indent\noindent}%
 {}{}
\numberwithin{equation}{section}
\renewcommand\th@plain{\slshape}
\newtheoremstyle{plain}
  {}
  {}
  {\slshape}
  {}
  {\sffamily\bfseries}
  {.}
  {.5em}
  {}
\theoremstyle{plain}
\newtheorem{theorem}{Theorem}
\newtheorem{pretheorem}{Theorem}
\newtheorem{corollary}{Corollary}
\newtheorem{lemma}{Lemma}
\newtheoremstyle{definition}
  {}
  {}
  {\normalfont}
  {}
  {\sffamily\bfseries}
  {.}
  {.5em}
  {}
\theoremstyle{definition}
\newtheorem{definition}{Definition}
\newtheorem{memo}{Memo}
\newtheorem{example}{Example}
\crefname{section}{Section}{Sections}
\crefname{theorem}{Theorem}{Theorems}
\crefname{pretheorem}{Theorem}{Theorems}
\crefname{corollary}{Corollary}{Corollaries}
\crefname{lemma}{Lemma}{Lemmas}
\crefname{proposition}{Proposition}{Propositions}
\crefname{claim}{Claim}{Claims}
\crefname{definition}{Definition}{Definitions}
\crefname{notation}{Notation}{Notations}
\crefname{problem}{Problem}{Problems}
\crefname{question}{Question}{Questions}
\crefname{note}{Note}{Notes}
\crefname{memo}{Memo}{Memos}
\crefname{remark}{Remark}{Remarks}
\crefname{example}{Example}{Examples}
\crefname{enumi}{}{}
\crefname{enumii}{}{}
\crefname{enumiii}{}{}
\def\csname ver@etex.sty\endcsname{3000/12/31}
\newcommand{\restore@Environment}[1]{%
  \AtBeginDocument{%
    \csletcs{#1*}{#1}%
    \csletcs{end#1*}{end#1}%
  }%
}
\forcsvlist\restore@Environment{alignat,equation,gather,multline,flalign,align}
\setlist{leftmargin=20pt}
\setlist[enumerate]{label=\textup{(\roman*)}, leftmargin=*}
\newif\ifshowkeys
\let\tmp\phi
\let\phi\varphi
\let\varphi\tmp
\let\tmp\epsilon
\let\epsilon\varepsilon
\let\varepsilon\tmp
\renewcommand{\subset}{\subseteq}
\renewcommand{\emptyset}{\varnothing}
\let\tmp\liminf
\let\liminf\varliminf
\let\varliminf\tmp
\let\tmp\limsup
\let\limsup\varlimsup
\let\varlimsup\tmp
\NewDocumentCommand{\xsideset}{mmme{_^}}{%
\mathop{%
\settowidth{\dimen0}{$\m@th\displaystyle#3$}%
\dimen0=.5\dimen0
\settowidth{\dimen2}{$%
\m@th\displaystyle#3%
\IfValueT{#4}{_{#4}}%
\IfValueT{#5}{^{#5}}%
$}%
\dimen2=.5\dimen2
\advance\dimen2 -\dimen0
\sbox6{\scriptspace\z@$\displaystyle{\vphantom{#3}}#1$}
\sbox8{\scriptspace\z@$\displaystyle{\vphantom{#3}}#2$}
\ifdim\wd6>\dimen2 \kern\dimexpr\wd6-\dimen2\relax\fi
{%
\mathop{\llap{\copy6}{\displaystyle#3}\rlap{\copy8}}\limits%
\IfValueT{#4}{_{#4}}%
\IfValueT{#5}{^{#5}}%
}%
\ifdim\wd8>\dimen2 \kern\dimexpr\wd8-\dimen2\relax\fi
}%
}
\newcommand*{\house}[1]{%
  \mathord{%
    \mathpalette\@house{#1}%
  }%
}
\newcommand*{\@house}[2]{%
  \dimen@=\fontdimen8 %
      \ifx#1\scriptscriptstyle\scriptscriptfont
      \else\ifx#1\scriptstyle\scriptfont
      \else\textfont\fi\fi
      3 %
  \sbox0{%
    $#1%
      \vrule width\dimen@\relax
      \overline{%
        \kern2\dimen@
        \begingroup 
          #2%
        \endgroup
        \kern2\dimen@
      }%
      \vrule width\dimen@\relax
      \mathsurround=1.5\dimen@ 
    $%
  }%
  \ht0=\dimexpr\ht0-\dimen@\relax
  \dp0=\dimexpr\dp0+2\dimen@\relax
  \vbox{%
    \kern\dimen@ 
    \copy0 %
  }%
}
\colorlet{Hajime}{yellow!60!black}  
\colorlet{Yuta}{green!60!black}     
\colorlet{Yohei}{blue!60!black}     
\begin{document}

\title[Irrationality criterion for sparse infinite series]
{Refinements of Erd\H{o}s's irrationality criterion\\
for certain sparse infinite series}
\author[H. Kaneko]{Hajime Kaneko}
\address{%
Institute of Mathematics, University of Tsukuba,
1-1-1 Tennodai Tsukuba, Ibaraki 305-8571, Japan}
\email{kanekoha@math.tsukuba.ac.jp}
\author[Y. Suzuki]{Yuta Suzuki}
\address{%
Department of Mathematics, Rikkyo University,
3-34-1 Nishi-Ikebukuro, Toshima-ku, Tokyo 171-8501, Japan}
\email{suzuyu@rikkyo.ac.jp}
\author[Y. Tachiya]{Yohei Tachiya}
\address{%
Graduate School of Science and Technology, Hirosaki University,
3 Bunkyo-cho, Hirosaki, Aomori 036-8561, Japan}
\email{tachiya@hirosaki-u.ac.jp}
\date{\today}
\keywords{Irrationality, Pisot number, Salem number, Euler's totient function, sum-of-divisors function}
\subjclass{%
Primary
11J72;
Secondary 
11J81,
11A25}
\maketitle

\begin{abstract}
In this paper, we establish new irrationality criteria
for certain sparse power series. 
As applications of these criteria,
we generalize a result of Erd\H{o}s 
and obtain several irrationality results for 
various infinite series involving the classical arithmetic functions. 
For example, we prove that for any integers $t\ge2$ and $k\geq0$, the numbers 
\[
\sum_{n=1}^{\infty}
\frac{d(n)^k}{t^{\sigma(n)}}
\quad\text{and}\quad
\sum_{n=1}^{\infty}
\frac{d(n)^k}{t^{\phi(n)}}
\]
are both irrational, where $d(n)$, $\sigma(n)$, and $\phi(n)$ denote  
the number of divisors, the sum of divisors, 
and Euler's totient functions, respectively. 
\end{abstract}

\section{Introduction and main results}
\label{sec:introduction}
Let $t\geq2$ be an integer and  
$(n_k)_{k=1}^{\infty}$ be a strictly increasing sequence of positive integers.
In 1954, Erd\H{o}s and Straus~\cite{Erdos:ES1954} proved that 
the number $\sum_{k=1}^{\infty}t^{-n_k}$ is transcendental 
if $(n_k)_{k=1}^{\infty}$ satisfies $\varlimsup_{k\to\infty}{\log n_k}/{\log k}=\infty$. 
For example, the numbers
\[
\sum_{n=1}^{\infty}\frac{1}{2^{\lfloor n^{\log n}\rfloor}}
\quad\text{and}\quad 
\sum_{n=2}^{\infty}\frac{1}{2^{\lfloor n^{\log \log n}\rfloor}}
\]
are transcendental, where $\lfloor x \rfloor$ denotes the greatest integer not exceeding $x$.  
Shortly thereafter, Erd\H{o}s~\cite{Erdos:PowerSeriesIrrationalitySigmaPhi}
has improved this result by showing that if 
$(n_k)_{k=1}^{\infty}$ satisfies $\varlimsup_{k\to\infty}{n_k}/{k^{\ell}}=\infty$
for some integer $\ell\ge1$, then the number 
$\sum_{k=1}^{\infty}t^{-n_k}$ is either transcendental or algebraic of degree at least $\ell+1$; 
in particular, the numbers
\[
\sum_{n=1}^{\infty}
\frac{1}{2^{n^3}}
\quad\text{and}\quad
\sum_{p:\text{prime}}
\frac{1}{2^{p^2}}
\] 
are neither rational nor quadratic irrational. 
Erd\H{o}s's proof depends on Lemma~4 of \cite{Erdos:PowerSeriesIrrationalitySigmaPhi},
a criterion for irrationality of certain infinite series. 
As its application, he also derived the irrationality of the numbers 
\begin{equation}\label{sigma_phi}
\sum_{n=1}^{\infty}\frac{1}{t^{\sigma(n)}}
\quad\text{and}\quad
\sum_{n=1}^{\infty}\frac{1}{t^{\phi(n)}},
\end{equation}
where  $\sigma(n)$ and $\phi(n)$  are the sum of divisors 
and Euler's totient functions defined, respectively, by 
\begin{equation}\label{def:sigma_phi}
\sigma(n)
\coloneqq
\sum_{d\mid n}
d
\quad\text{and}\quad 
\phi(n)
\coloneqq
\#\{m\in\mathbb{Z}\mid (m,n)=1,\ 1\le m\le n\}
\quad\text{for $n\ge 1$}.
\end{equation}
Moreover, in the same paper, he introduced the following \cref{prethm:ErdosLemma4prime}  
as a more general version of Lemma~4 of \cite{Erdos:PowerSeriesIrrationalitySigmaPhi}; 
however, the details of the proof have been omitted. 
For a sequence $\bm{a}\coloneqq(a(n))_{n=1}^{\infty}$ of integers and for $x>1$, 
we define 
\begin{equation}
\label{def:N}
\mathcal{N}_{\bm{a}}
\coloneqq
\{
n\in\mathbb{Z}_{\ge1}
\mid
a(n)\neq0\},
\end{equation}
\begin{equation}
\label{def:SNx}
S_{\bm{a}}(x)
\coloneqq
\sum_{n<x}|a(n)|
\quad\text{and}\quad 
\mathcal{N}_{\bm{a}}(x)
\coloneqq
\mathcal{N}_{\bm{a}}\cap[1,x).
\end{equation}
Note that we will extend the definition of $S_{\bm{a}}(x)$
to sequences of algebraic integers at \cref{def:SNx_extended}.

\begin{pretheorem}[{\cite[Lemma~4']{Erdos:PowerSeriesIrrationalitySigmaPhi}}]
\label{prethm:ErdosLemma4prime}
Let $t\ge2$ be an integer and 
\[
\bm{a}\coloneqq(a(n))_{n=1}^{\infty}
\quad\text{and}\quad
\bm{b}\coloneqq(b(n))_{n=1}^{\infty}
\]
be sequences of integers such that $a(n)\ge0$ for $n\ge1$, $\#\mathcal{N}_{\bm{a}}=\infty$ and
\newcounter{prethm:ErdosLemma4prime:i}
\setcounter{prethm:ErdosLemma4prime:i}{0}
\begin{enumerate}[leftmargin=*]
\setcounter{enumi}{\value{prethm:ErdosLemma4prime:i}}
\item\label{prethm:ErdosLemma4prime:pointwise}
$\varlimsup_{n\to\infty}(a(n)+|b(n)|)^{1/n}<t$.
\setcounter{prethm:ErdosLemma4prime:i}{\value{enumi}}
\end{enumerate}
Suppose that there exists a sequence
$(x_{n})_{n\ge 1}$  
of real numbers $\ge1$ satisfying the following conditions as $n\to\infty$\textup{:}
\begin{enumerate}[leftmargin=*]
\setcounter{enumi}{\value{prethm:ErdosLemma4prime:i}}
\item\label{prethm:ErdosLemma4prime:x_infty}
$x_{n}\to\infty$. 
\item\label{prethm:ErdosLemma4prime:moderate_growth}
$S_{\bm{a}}(x_{n}), S_{\bm{b}}(x_{n})=O(x_{n})$.
\item\label{prethm:ErdosLemma4prime:sparsity}
$\#\mathcal{N}_{\bm{a}}(x_{n}),\#\mathcal{N}_{\bm{b}}(x_{n})=o(x_{n})$.
\setcounter{prethm:ErdosLemma4prime:i}{\value{enumi}}
\end{enumerate}
Moreover, if $\#\mathcal{N}_{\bm{b}}=\infty$,
we assume that  there exist constants $\Delta,L>1$ satisfying the condition 
\begin{enumerate}[leftmargin=*]
\setcounter{enumi}{\value{prethm:ErdosLemma4prime:i}}
\item
For any consecutive indices $m<m_{+}$ in $\mathcal{N}_{\bm{b}}$
and any real number $\mu\ge L$ with $m+\Delta\mu<m_{+}$,
we have $\mathcal{N}_{\bm{a}}\cap [m+\mu,m+\Delta\mu)\neq\emptyset$.
\setcounter{prethm:ErdosLemma4prime:i}{\value{enumi}}
\end{enumerate}
Then the number 
\begin{equation}
\label{prethm:ErdosLemma4prime:series}
\sum_{n=1}^{\infty}
\frac{a(n)+b(n)}{t^{n}}
\end{equation}
is irrational. 
\end{pretheorem}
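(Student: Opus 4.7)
My plan is to argue by contradiction. Suppose $\alpha := \sum_{n=1}^{\infty}(a(n)+b(n))/t^n = p/q$ with $p\in\mathbb{Z}$ and $q\in\mathbb{Z}_{\geq 1}$, and write the tail $R_N := \alpha - \sum_{n \leq N}(a(n)+b(n))/t^n$, so that $I_N := q t^N R_N$ is a rational integer for every $N \geq 1$. The goal is to produce integers $N_n \to \infty$ with $0 < |I_{N_n}| < 1$, contradicting integrality.

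To secure the upper bound, I would fix $\rho \in (1,t)$ with $a(m)+|b(m)| \leq \rho^m$ for $m$ large (provided by \cref{prethm:ErdosLemma4prime:pointwise}) and a constant $c_0 \in (0, \log(t/\rho)/\log t)$. The tail splits as
\[
|I_{N_n}| \leq q\sum_{N_n<m\leq x_n}\frac{a(m)+|b(m)|}{t^{m-N_n}} + q\sum_{m>x_n}\frac{a(m)+|b(m)|}{t^{m-N_n}}.
\]
Choosing $N_n \leq c_0 x_n$ forces the far tail to be $O((\rho/t)^{x_n - N_n})$, which is $o(1)$ by \cref{prethm:ErdosLemma4prime:x_infty}. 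For the near tail, the factor $1/t^{m-N_n} \leq 1/t^{M_n - N_n}$ (where $M_n$ is the first index $>N_n$ with $c(m):=a(m)+|b(m)|\neq0$) combined with the mass bound $\sum_{m < x_n} c(m) \leq 2K x_n$ from \cref{prethm:ErdosLemma4prime:moderate_growth} yields a bound of order $Kx_n/t^{M_n - N_n}$. The sparsity \cref{prethm:ErdosLemma4prime:sparsity} (via a pigeonhole argument in the window $[1, c_0 x_n)$) produces $N_n$ with $M_n - N_n$ growing sufficiently quickly to force this to be $o(1)$.

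For the non-vanishing of $I_{N_n}$, I would separate cases. If $\#\mathcal{N}_{\bm{b}} < \infty$, then for $n$ large $R_{N_n} = \sum_{m > N_n} a(m)/t^m > 0$ because $a \geq 0$ and $\#\mathcal{N}_{\bm{a}} = \infty$. If $\#\mathcal{N}_{\bm{b}} = \infty$, I would place $N_n$ in $[m, m + (m_+ - m)/\Delta)$ between consecutive elements $m<m_+$ of $\mathcal{N}_{\bm{b}}$ with $N_n - m \geq L$; condition (v) applied with $\mu = N_n - m$ then furnishes an $a$-index $M \in [N_n, m + \Delta(N_n - m)) \subset (N_n, m_+)$. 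The positive contribution $a(M)/t^{M-N_n}$ to $R_{N_n}$ dominates the subsequent $b$-contribution at $m_+$ (and beyond) by the geometric decay $t^{-(m_+ - N_n)}$ combined with \cref{prethm:ErdosLemma4prime:pointwise}, so $R_{N_n} \neq 0$.

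The main difficulty lies in realizing all constraints on $N_n$ simultaneously: the far-tail window $N_n < c_0 x_n$, the near-tail gap $M_n - N_n \gg \log x_n$, and (when $b$ has infinite support) the position inside an $[m, m+(m_+-m)/\Delta)$ interval with $N_n - m \geq L$. This demands an interleaved pigeonhole on both $\mathcal{N}_{\bm{a}}$ and $\mathcal{N}_{\bm{b}}$ inside $[1, c_0 x_n)$, exploiting \cref{prethm:ErdosLemma4prime:moderate_growth}, \cref{prethm:ErdosLemma4prime:sparsity}, and condition (v), and is where I expect the technical heart of the argument to lie.
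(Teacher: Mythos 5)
Your outline follows the same broad strategy as the paper specialized to rational $t$ (by contradiction, $I_N := qt^N R_N$ is a nonzero integer, so $|I_N| \geq 1$; one forces $|I_{N}|<1$ by choosing $N$ well), so the route is not different in kind. But the step you yourself flag as the ``technical heart'' --- realizing all constraints on $N_n$ simultaneously --- is precisely where the argument lives, and your sketch does not supply a mechanism that would work. A direct pigeonhole on both $\mathcal{N}_{\bm{a}}$ and $\mathcal{N}_{\bm{b}}$ inside $[1, c_0 x_n)$ is the wrong shape: the upper-bound constraint (``gap $M_n - N_n$ large'') and the non-vanishing constraint (``$N_n$ sits in the first $1/(2\Delta)$-fraction of a gap of $\mathcal{N}_{\bm{b}}$, beyond distance $L$ from the left endpoint'') each carve out a subset of $[1, \eta x_n)$, and there is no obvious reason two such subsets defined by different data meet. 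The paper resolves this by a density-plus-inclusion-exclusion argument (\cref{lem:small_tail} and \cref{lem:perturbation_removal}): one shows the set $\mathcal{F}_1$ of $N$ with a small tail has cardinality $\eta x_n + o(x_n)$ (i.e.\ \emph{almost all} of $[1,\eta x_n)$), while the set $\mathcal{F}_2$ of $N$ with $\xi_N(\bm{a}) > \xi_N(|\bm{b}|)$ has cardinality $\geq C x_n$ for some fixed $C>0$; then $\#(\mathcal{F}_1 \cap \mathcal{F}_2) \geq \#\mathcal{F}_1 + \#\mathcal{F}_2 - \eta x_n = Cx_n + o(x_n) > 0$. This ``almost all versus positive proportion'' split is the missing idea, and it is not a pigeonhole but a counting argument with an asymmetric strength requirement on the two sets.

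There is a second, more local gap. Condition~\cref{prethm:ErdosLemma4prime:sparsity} gives only $\#\mathcal{N}_{\bm{a}}(x_n), \#\mathcal{N}_{\bm{b}}(x_n) = o(x_n)$, but to run the density argument you need a growing threshold $z_n$ with $\#\mathcal{N}(x_n) = o(x_n/z_n)$ --- then the set of $N \in [1,\eta x_n)$ with a nonzero entry within distance $z_n$ has size $\lesssim z_n \cdot \#\mathcal{N}(x_n) = o(x_n)$, and $z_n \to \infty$ lets $t^{-z_n}$ beat the $O(x_n)$ mass from \cref{prethm:ErdosLemma4prime:moderate_growth}. Your writeup gestures at ``$M_n - N_n$ growing sufficiently quickly'' without pinning down what threshold is achievable. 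The paper's deduction of \cref{prethm:ErdosLemma4prime} from the general criterion does exactly this calibration: it sets $u_n := x_n / \max(1, \#\mathcal{N}_{\bm{a}}(x_n), \#\mathcal{N}_{\bm{b}}(x_n))$, which tends to infinity by \cref{prethm:ErdosLemma4prime:sparsity}, and then takes $z_n := u_n^{1/2}$. The square root simultaneously yields $\#\mathcal{N}(x_n) \leq x_n/u_n = x_n/z_n^2 = o(x_n/z_n)$ and $z_n \to \infty$. Without some such explicit choice of intermediate scale, your near-tail bound $Kx_n / t^{M_n-N_n}$ has no reason to be $o(1)$ on a large enough set of $N$ to also satisfy the interlacing constraint.
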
 

The aim of the present paper is to give new irrationality criteria
for sparse power series of the type \cref{prethm:ErdosLemma4prime:series}.
As applications,
we will derive a generalization of the above Erd\H{o}s's \cref{prethm:ErdosLemma4prime}
as well as its complete proof,
and provide some irrationality results for 
certain infinite series involving the classical arithmetic functions.  
Before stating our results, we need some preparation. 

\begin{definition}\label{def:PisotSalem} 
A \emph{Pisot number} is
a real algebraic integer greater than $1$ 
whose Galois conjugates over $\mathbb{Q}$
other than itself have absolute value less than $1$.
A \emph{Salem number} is
a real algebraic integer greater than $1$
whose Galois conjugates over $\mathbb{Q}$
other than itself have absolute value no greater than $1$,
and at least one of which has absolute value exactly $1$. 
\end{definition}

Note that every rational integer greater than $1$ is a Pisot number.  
For an algebraic number $\alpha$, 
let $\house{\alpha}$ denote the maximal absolute value
of the Galois conjugates of $\alpha$ over $\mathbb{Q}$.
In particular, we have $\house{\alpha}=|\alpha|$ for any rational number $\alpha$,
and so the function $S_{\bm{a}}(x)$ defined in \cref{def:SNx}
can be extended to a sequence $\bm{a}\coloneqq(a(n))_{n=1}^{\infty}$ of algebraic numbers as 
\begin{equation}
\label{def:SNx_extended}
S_{\bm{a}}(x)
\coloneqq
\sum_{n<x}\house{a(n)}. 
\end{equation}

Moreover, for a sequence $\bm{c}\coloneqq (c(n))_{n=1}^{\infty}$ of complex numbers
and for real numbers $q,x>1$ and $z\geq0$, we define the sets
$\mathcal{N}_{\bm{c}}$ and $\mathcal{N}_{\bm{c}}(x)$ by \cref{def:N} and \cref{def:SNx},
respectively, and  
\begin{equation}
\label{def:R}
R_{\bm{c}}(q,x,z)
\coloneqq
\sum_{n<x}
\sum_{j\ge z}
\frac{|c(n+j)|}{q^{j}},
\end{equation}
provided that the sum converges.
Roughly speaking,
$R_{\bm{c}}(q,x,z)$ measures the decay of the ``tail'' series on average.

Our fundamental irrationality criterion is the following.
\begin{theorem}
\label{thm:criterion_irrationality_original}
Let $q$ be a Pisot or Salem number of degree $d$ over $\mathbb{Q}$. 
Let  
\[
\bm{a}\coloneqq(a(n))_{n=1}^{\infty}
\quad\text{and}\quad
\bm{b}\coloneqq(b(n))_{n=1}^{\infty}
\]
be sequences of algebraic integers of the field $\mathbb{Q}(q)$
such that $a(n)\ge0$ for $n\ge 1$ and $\#\mathcal{N}_{\bm{a}}=\infty$. 
Suppose that there exist sequences
$(x_n)_{n=1}^{\infty}$, $(y_n)_{n=1}^{\infty}$, $(z_n)_{n=1}^{\infty}$
of real numbers $\ge1$ and a constant $\eta\in(0,1]$
satisfying the following conditions as $n\to\infty$\textup{:}
\newcounter{thm:criterion_irrationality_original:i}
\setcounter{thm:criterion_irrationality_original:i}{0}
\begin{enumerate}[leftmargin=*]
\setcounter{enumi}{\value{thm:criterion_irrationality_original:i}}
\item\label{thm:criterion_irrationality_original:x_infty}
$x_{n}\to\infty$. 
\item\label{thm:criterion_irrationality_original:S_house_bound}
$S_{\bm{a}}(x_{n}), S_{\bm{b}}(x_{n})=O(y_{n})$. 
\item\label{thm:criterion_irrationality_original:sparsity}
$\#\mathcal{N}_{\bm{a}}(x_n), \#\mathcal{N}_{\bm{b}}(x_n)
=o\left({x_n}/{z_n}\right)$.
\item\label{thm:criterion_irrationality_original:average_decay}
$R_{\bm{a}}(q,\eta x_{n},z_{n}), R_{\bm{b}}(q,\eta x_{n},z_{n})
=
o(x_{n}/y_{n}^{d-1})$.
\setcounter{thm:criterion_irrationality_original:i}{\value{enumi}}
\end{enumerate}
Moreover, if $\#\mathcal{N}_{\bm{b}}=\infty$, we assume that 
there exist constants $\Delta,L>1$ satisfying 
\begin{enumerate}[leftmargin=*]
\setcounter{enumi}{\value{thm:criterion_irrationality_original:i}}
\item\label{thm:criterion_irrationality_original:interlace}
For any consecutive indices $m<m_{+}$ in $\mathcal{N}_{\bm{b}}$
and any real number $\mu\ge L$ with $m+\Delta\mu<m_{+}$,
we have $\mathcal{N}_{\bm{a}}\cap [m+\mu,m+\Delta\mu)\neq\emptyset$.
\setcounter{thm:criterion_irrationality_original:i}{\value{enumi}}
\end{enumerate}
Then we have
\begin{equation}\label{thm:criterion_irrationality_original:series}
\sum_{n=1}^{\infty}
\frac{a(n)+b(n)}{q^{n}}
\not\in\mathbb{Q}(q).
\end{equation}
\end{theorem}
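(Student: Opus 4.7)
The plan is to argue by contradiction, producing for each large $n$ a nonzero algebraic integer $\gamma_{N_n}\in\mathbb{Z}[q]$ whose Galois conjugates are $O(y_n)$ but whose own absolute value is $o(y_n^{-(d-1)})$; this contradicts the fact that the product $\prod_{i=1}^{d}\gamma_{N_n}^{(i)}$ is a nonzero rational integer, which forces $|\gamma_{N_n}|\ge\house{\gamma_{N_n}}^{-(d-1)}$. Suppose the series \cref{thm:criterion_irrationality_original:series} lies in $\mathbb{Q}(q)$, and write it as $\alpha=\gamma_0/\delta_0$ with $\gamma_0\in\mathbb{Z}[q]$ and $\delta_0\in\mathbb{Z}_{>0}$. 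For each positive integer $N$ define
\[
\gamma_N\coloneqq\delta_0 q^{N}\alpha-\delta_0\sum_{k\le N}(a(k)+b(k))q^{N-k}=\delta_0\sum_{j\ge 1}\frac{a(N+j)+b(N+j)}{q^{j}},
\]
which is an algebraic integer in $\mathbb{Z}[q]$.

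Since $q$ is Pisot or Salem, every non-principal conjugate satisfies $|q^{(i)}|\le 1$, so conjugating the first expression above and invoking \cref{thm:criterion_irrationality_original:S_house_bound} yields
\[
\house{\gamma_N}\ll\house{\gamma_0}+\delta_0\bigl(S_{\bm{a}}(x_n)+S_{\bm{b}}(x_n)\bigr)=O(y_n)
\]
whenever $N\le x_n$. Consequently, it suffices to locate integers $N_n\in[1,\eta x_n]$ with $\gamma_{N_n}\ne 0$ and
\[
|\gamma_{N_n}|\le\delta_0\sum_{j\ge 1}\frac{|a(N_n+j)|+|b(N_n+j)|}{q^{j}}=o(y_n^{-(d-1)}).
\]

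The smallness estimate will come from combining \cref{thm:criterion_irrationality_original:sparsity,thm:criterion_irrationality_original:average_decay}. By sparsity, the set of $N\in[1,\eta x_n]$ for which the ``buffer'' interval $(N,N+z_n)$ meets $\mathcal{N}_{\bm{a}}\cup\mathcal{N}_{\bm{b}}$ has cardinality at most $z_n\cdot o(x_n/z_n)=o(x_n)$; outside this set, the inner portion of the tail of $\gamma_N/\delta_0$ vanishes and only $\sum_{j\ge z_n}$ remains. Applying pigeonhole to the average bound \cref{thm:criterion_irrationality_original:average_decay} then shows that the set of $N<\eta x_n$ for which this outer tail exceeds any prescribed $o(y_n^{-(d-1)})$ threshold has density $o(1)$ in $[1,\eta x_n]$. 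The intersection of the complements of these two bad sets therefore contains a subset of $N\in[1,\eta x_n]$ of positive density on which $|\gamma_N|=o(y_n^{-(d-1)})$ holds uniformly.

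The main obstacle is securing $\gamma_{N_n}\ne 0$ on this good set. If $\#\mathcal{N}_{\bm{b}}<\infty$, nonvanishing is automatic once $N_n>\max\mathcal{N}_{\bm{b}}$, since then $\gamma_{N_n}/\delta_0=\sum_{j\ge 1}a(N_n+j)/q^{j}$ is a sum of non-negative terms with infinitely many strictly positive summands. If $\#\mathcal{N}_{\bm{b}}=\infty$, I would restrict $N_n$ to the deep interior of a sufficiently long gap $(m,m_+)$ between consecutive elements of $\mathcal{N}_{\bm{b}}$ and apply \cref{thm:criterion_irrationality_original:interlace} with $\mu$ just above $\max(L,N_n-m)$ to locate an index $N_n+j_a\in\mathcal{N}_{\bm{a}}$ with $j_a$ much smaller than $j_b\coloneqq m_+-N_n$. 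Because $a(N_n+j_a)>0$ and the lower bound $a(N_n+j_a)\ge\house{a(N_n+j_a)}^{-(d-1)}\gg y_n^{-(d-1)}$ holds by the norm inequality applied to the algebraic integer $a(N_n+j_a)$, while $|b(m_+)|/q^{j_b}$ and the already-controlled outer tail are further suppressed by the large factor $q^{j_b-j_a}$, the positive leading term $a(N_n+j_a)/q^{j_a}$ dominates the entire remainder and $\gamma_{N_n}\ne 0$. The genuinely delicate step, and the main technical hurdle, is the quantitative coordination of the fixed constants $\Delta,L,\eta$ with the growth rates of $x_n,y_n,z_n$ so that the positive-density good set produced above meets sufficiently interior portions of long $\mathcal{N}_{\bm{b}}$-gaps, and so that the resulting geometric cushion $q^{j_b-j_a}$ exceeds the polynomial factor $y_n^{d}$ needed for domination.
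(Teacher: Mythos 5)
Your high-level strategy — proving by contradiction, forming $\gamma_N\in\mathcal{O}_{\mathbb{Q}(q)}$, bounding all conjugates by $O(y_n)$ via the Pisot/Salem property, and seeking $N$ with $\gamma_N\neq 0$ and $|\gamma_N|=o(y_n^{-(d-1)})$ to contradict $|N_{\mathbb{Q}(q)/\mathbb{Q}}(\gamma_N)|\ge 1$ — is exactly the skeleton of the paper's proof (\cref{lem:fundamental_criterion}), and your buffer/pigeonhole argument for producing a positive-density set where $|\gamma_N|$ is small is essentially the paper's \cref{lem:small_tail}. However, the step you yourself flag as ``the genuinely delicate step'' is where the argument breaks, and your proposed route there does not go through.

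You propose to secure $\gamma_{N_n}\neq 0$ by placing $N_n$ in a long $\mathcal{N}_{\bm{b}}$-gap $(m,m_+)$ and arranging that the geometric cushion $q^{j_b-j_a}$ exceeds $y_n^{d}$, so that the single term $a(N_n+j_a)/q^{j_a}\gg y_n^{-(d-1)}q^{-j_a}$ dominates both $|b(m_+)|/q^{j_b}\ll y_n q^{-j_b}$ and the outer tail. This requires $j_b-j_a\gg\log y_n$, hence gaps $m_+-m\gg\log y_n$. But the hypotheses only give $\#\mathcal{N}_{\bm{b}}(x_n)=o(x_n/z_n)$, so the typical gap is $\gg z_n$, and nothing in conditions \cref{thm:criterion_irrationality_original:x_infty}--\cref{thm:criterion_irrationality_original:interlace} forces $z_n\gg\log y_n$. (There is also an internal tension in your buffer step: if the buffer $(N,N+z_n)$ must avoid $\mathcal{N}_{\bm{a}}\cup\mathcal{N}_{\bm{b}}$ to kill the inner tail, then necessarily $j_a\ge z_n$, so the leading term you rely on is itself part of the outer tail you have already forced to be $o(y_n^{-(d-1)})$; extracting a lower bound from inside a sum you are making small requires more care than sketched.) The paper's resolution (\cref{lem:perturbation_removal}) is sharper: it applies the small-tail lemma to $\bm{b}$ \emph{alone} and thereby finds, in the right half of each typical gap, an integer $k$ with $\xi_k(|\bm{b}|)\le (cy_n)^{-(d-1)}$. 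Since the interlacing condition with $\mu=\ell(m)=(m_+-m)/(2\Delta)$ produces $u\in\mathcal{N}_{\bm{a}}$ with $u<k$ and $a(u)$ a nonzero algebraic integer gives $\xi_u(|\bm{a}|)\ge\house{a(u)}^{-(d-1)}>(cy_n)^{-(d-1)}$, the two sides of $\xi_N(|\bm{a}|)\ge q^{N-u}\xi_u(|\bm{a}|)$ versus $\xi_N(|\bm{b}|)=q^{N-k}\xi_k(|\bm{b}|)$ are matched to the \emph{same} scale $(cy_n)^{-(d-1)}$, and the geometric cushion only needs to be $q^{k-u}>1$ rather than $>y_n^d$. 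This scale-matching via \cref{lem:small_tail} for $\bm{b}$ is the essential idea missing from your sketch, and without it the nonvanishing claim is not justified under the stated hypotheses.
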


Condition~\cref{thm:criterion_irrationality_original:average_decay} 
in \cref{thm:criterion_irrationality_original}  
guarantees the convergence of the series \cref{thm:criterion_irrationality_original:series}.
Since this condition is not so convenient for application,
we replace condition \cref{thm:criterion_irrationality_original:average_decay}
with more concise conditions to obtain the following irrationality criterion.

\begin{theorem}\label{thm:criterion_irrationality_prepared}
Let $q$ be a Pisot or Salem number of degree $d$ over $\mathbb{Q}$ 
and $\bm{a}$, $\bm{b}$ be sequences of algebraic integers of $\mathbb{Q}(q)$
such that $a(n)\ge0$ for $n\ge 1$, $\#\mathcal{N}_{\bm{a}}=\infty$ and
\newcounter{thm:criterion_irrationality_prepared:i}
\setcounter{thm:criterion_irrationality_prepared:i}{0}
\begin{enumerate}[label=\textup{(iv-\arabic*)}, leftmargin=*]
\setcounter{enumi}{\value{thm:criterion_irrationality_prepared:i}}
\item
\label{thm:criterion_irrationality_prepared:pointwise}
$\rho\coloneqq 
\varlimsup_{n\to\infty}\max\{\house{a(n)}, \house{b(n)}\}^{1/n}<q$.
\setcounter{thm:criterion_irrationality_prepared:i}{\value{enumi}}
\end{enumerate}
Suppose that there exist sequences
$(x_n)_{n=1}^{\infty}$, $(y_n)_{n=1}^{\infty}$, $(z_{n})_{n=1}^{\infty}$
of real numbers $\ge1$ satisfying conditions
\cref{thm:criterion_irrationality_original:x_infty}, 
\cref{thm:criterion_irrationality_original:S_house_bound},  
\cref{thm:criterion_irrationality_original:sparsity} 
in \cref{thm:criterion_irrationality_original} and
\begin{enumerate}[label=\textup{(iv-\arabic*)}, leftmargin=*]
\setcounter{enumi}{\value{thm:criterion_irrationality_prepared:i}}
\item
\label{thm:criterion_irrationality_prepared:yn_bound}
$\varlimsup_{n\to\infty}y_{n}^{{(d-1)}/{x_{n}}}<q/\rho$.
\item
\label{thm:criterion_irrationality_prepared:moderate_growth}
$\sum_{m<x_n}a(m), \sum_{m<x_n}|b(m)|
=
o(q^{z_{n}}x_{n}/y_{n}^{d-1})$
as $n\to\infty$.
\setcounter{thm:criterion_irrationality_prepared:i}{\value{enumi}}
\end{enumerate}
Moreover, if $\#\mathcal{N}_{\bm{b}}=\infty$,
we assume condition~\cref{thm:criterion_irrationality_original:interlace}
in \cref{thm:criterion_irrationality_original}. 
Then we obtain the conclusion~\cref{thm:criterion_irrationality_original:series}
of \cref{thm:criterion_irrationality_original}. 
\end{theorem}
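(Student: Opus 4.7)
The plan is to deduce \cref{thm:criterion_irrationality_prepared} from \cref{thm:criterion_irrationality_original} by showing that conditions \cref{thm:criterion_irrationality_prepared:pointwise}, \cref{thm:criterion_irrationality_prepared:yn_bound}, and \cref{thm:criterion_irrationality_prepared:moderate_growth} together imply condition \cref{thm:criterion_irrationality_original:average_decay} of \cref{thm:criterion_irrationality_original} for a suitably small $\eta\in(0,1]$. All other hypotheses of \cref{thm:criterion_irrationality_original} are assumed directly in \cref{thm:criterion_irrationality_prepared}, so it suffices to verify \cref{thm:criterion_irrationality_original:average_decay}.

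Set $Y\coloneqq\varlimsup_{n\to\infty}y_{n}^{(d-1)/x_{n}}$, so that $\rho Y<q$ by \cref{thm:criterion_irrationality_prepared:yn_bound}. Since $\rho_{1}Y'q^{\eta-1}$ tends to $\rho Y/q<1$ as $(\rho_{1},Y',\eta)\to(\rho,Y,0)$, we fix $\rho_{1}\in(\rho,q)$, $Y'\in(Y,q/\rho)$, and $\eta\in(0,1)$ so that
\[
\rho_{1}Y'<q^{1-\eta}.
\]
By \cref{thm:criterion_irrationality_prepared:pointwise} (together with $|\alpha|\le\house{\alpha}$) and the definition of $Y$, there is $C>0$ with $|a(n)|,|b(n)|\le C\rho_{1}^{n}$ and $y_{n}^{d-1}\le C(Y')^{x_{n}}$ for all $n\ge1$.

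To verify \cref{thm:criterion_irrationality_original:average_decay}, swap the order of summation in \cref{def:R} and relax the lower limit of the inner sum:
\[
R_{\bm{a}}(q,\eta x_{n},z_{n})
=\sum_{j\ge z_{n}}\frac{1}{q^{j}}\sum_{1\le k<\eta x_{n}}|a(k+j)|
\le\sum_{j\ge z_{n}}\frac{1}{q^{j}}\sum_{1\le m<\eta x_{n}+j}|a(m)|.
\]
Split the outer sum at $j=(1-\eta)x_{n}$. For $z_{n}\le j<(1-\eta)x_{n}$ one has $\eta x_{n}+j<x_{n}$, so the inner sum is at most $\sum_{m<x_{n}}a(m)=o(q^{z_{n}}x_{n}/y_{n}^{d-1})$ by \cref{thm:criterion_irrationality_prepared:moderate_growth}; combined with $\sum_{j\ge z_{n}}q^{-j}=O(q^{-z_{n}})$, this range contributes $o(x_{n}/y_{n}^{d-1})$. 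For $j\ge(1-\eta)x_{n}$, the pointwise bound yields an inner sum $O(\rho_{1}^{\eta x_{n}+j})$, and summing over $j$ gives $O(\rho_{1}^{\eta x_{n}}(\rho_{1}/q)^{(1-\eta)x_{n}})=O(\rho_{1}^{x_{n}}q^{-(1-\eta)x_{n}})$; multiplying by $y_{n}^{d-1}$ then produces $O((\rho_{1}Y'q^{\eta-1})^{x_{n}})$, which decays exponentially and is in particular $o(x_{n}/y_{n}^{d-1})$. The same estimate bounds $R_{\bm{b}}(q,\eta x_{n},z_{n})$ via $\sum_{m<x_{n}}|b(m)|$, completing the verification of \cref{thm:criterion_irrationality_original:average_decay}.

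The main difficulty is the joint selection of $\eta$, $\rho_{1}$, and $Y'$ satisfying $\rho_{1}Y'<q^{1-\eta}$: $\eta$ must be small enough for \cref{thm:criterion_irrationality_prepared:moderate_growth} to control the main range $j<(1-\eta)x_{n}$, yet the tail factor $(\rho_{1}Y'q^{\eta-1})^{x_{n}}$ on the range $j\ge(1-\eta)x_{n}$ must still decay. This balancing is possible exactly because of the strict inequality in \cref{thm:criterion_irrationality_prepared:yn_bound}. Once \cref{thm:criterion_irrationality_original:average_decay} is verified, \cref{thm:criterion_irrationality_original} concludes \cref{thm:criterion_irrationality_original:series}.
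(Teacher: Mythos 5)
Your proof is correct and takes essentially the same route as the paper: you verify condition (iv) of Theorem~\ref{thm:criterion_irrationality_original} by splitting the double sum defining $R$ at $j=(1-\eta)x_n$, handling the main range via condition \ref{thm:criterion_irrationality_prepared:moderate_growth} and the tail via the pointwise/exponential bounds from \ref{thm:criterion_irrationality_prepared:pointwise} and \ref{thm:criterion_irrationality_prepared:yn_bound}, just as the paper does (it merely packages the split into a separate Lemma~\ref{lem:use_summatory_function}). One trivial slip in phrasing: after multiplying the tail contribution by $y_n^{d-1}$ you obtain $O((\rho_1 Y' q^{\eta-1})^{x_n})=o(1)=o(x_n)$, and it is this that shows the tail itself is $o(x_n/y_n^{d-1})$.
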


As applications of \cref{thm:criterion_irrationality_prepared},
we obtain the following Corollaries~\ref{cor:MinkowskiSum} and \ref{cor:degree_ell_irrationality}. 
For non-empty sets $\mathcal{A}$ and $\mathcal{B}$ of non-negative integers,
we define the sumset $\mathcal{A}+\mathcal{B}$ of $\mathcal{A}$ and $\mathcal{B}$ by 
\[
\mathcal{A}+\mathcal{B}\coloneqq 
\{m+n\mid m\in\mathcal{A},\ n\in\mathcal{B}\}.
\]
\begin{corollary}
\label{cor:MinkowskiSum}
Let $q$ be a Pisot or Salem number. 
Let $\bm{u}$ and $\bm{v}$
be sequences of algebraic integers of $\mathbb{Q}(q)$ with $u(n)>0$ for $n\ge 1$. 
Let $\mathcal{A}$ and $\mathcal{B}$
be non-empty sets of non-negative integers with $\# \mathcal{A}=\infty$. 
Suppose that
\begin{equation}\label{cor:MinkowskiSum:sparsity}
\#\bigl((\mathcal{A}+\mathcal{B})\cap [1,n)\bigr)
\cdot
\log\max\{S_{\bm{u}}(n),S_{\bm{v}}(n)\}
=
o(n)
\quad\text{as $n\to\infty$}, 
\end{equation}
and moreover, if $\# \mathcal{B}=\infty$, we assume that there exist constants $\Delta,L>1$ satisfying 
\begin{align}\label{cor:MinkowskiSum:interlace}
\mathcal{A}\cap[\mu,\Delta \mu)\ne\emptyset
\quad\text{for any $\mu\ge L$}. 
\end{align}
Then we have 
\[
\sum_{n\in\mathcal{A}+\mathcal{B}}^{}\frac{u(n)}{q^n}+
\sum_{n\in\mathcal{B}}^{}\frac{v(n)}{q^n}\not\in\mathbb{Q}(q).
\]
\end{corollary}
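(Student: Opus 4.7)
The plan is to deduce the corollary by applying \cref{thm:criterion_irrationality_prepared} with the sequences
\[
a(n) \coloneqq \begin{cases} u(n) & \text{if } n\in\mathcal{A}+\mathcal{B},\\ 0 & \text{otherwise}, \end{cases}
\qquad
b(n) \coloneqq \begin{cases} v(n) & \text{if } n\in\mathcal{B},\\ 0 & \text{otherwise}, \end{cases}
\]
for $n\ge 1$, so that $\mathcal{N}_{\bm{a}}$ and $\mathcal{N}_{\bm{b}}$ are the positive elements of $\mathcal{A}+\mathcal{B}$ and of $\mathcal{B}$, and $\sum_{n\ge 1}(a(n)+b(n))/q^n$ reproduces the target sum. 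I would set $x_n\coloneqq n$, $y_n\coloneqq\max\{S_{\bm{u}}(n),S_{\bm{v}}(n),1\}$, and $z_n\coloneqq\max\{1,C\log y_n\}$ for a constant $C>d/\log q$ to be fixed.

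The key preliminary observation is that every non-zero algebraic integer $\alpha\in\mathbb{Q}(q)$ satisfies $\house{\alpha}\ge 1$, since $|N_{\mathbb{Q}(q)/\mathbb{Q}}(\alpha)|\le\house{\alpha}^d$ and the norm is a non-zero rational integer. Because $u(n)>0$ for all $n\ge 1$, this gives $S_{\bm{u}}(n)\ge n-1$, so $y_n\to\infty$. Combined with the fact that $\#((\mathcal{A}+\mathcal{B})\cap[1,n))\to\infty$ (as $\#\mathcal{A}=\infty$), the hypothesis \cref{cor:MinkowskiSum:sparsity} yields $\log y_n=o(n)$, from which condition~\cref{thm:criterion_irrationality_prepared:pointwise} ($\rho\le 1<q$) and condition~\cref{thm:criterion_irrationality_prepared:yn_bound} ($y_n^{(d-1)/n}\to 1<q/\rho$) both follow at once.

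Conditions \cref{thm:criterion_irrationality_original:S_house_bound} and \cref{thm:criterion_irrationality_original:sparsity} are then straightforward: $S_{\bm{a}}(n)\le S_{\bm{u}}(n)\le y_n$, and \cref{cor:MinkowskiSum:sparsity} with $z_n=O(\log y_n)$ gives $\#\mathcal{N}_{\bm{a}}(n)\cdot z_n=o(n)$; if $\#\mathcal{B}=\infty$, the corresponding bound for $\bm{b}$ follows from $\mathcal{B}+a_0\subseteq\mathcal{A}+\mathcal{B}$ for a fixed $a_0\in\mathcal{A}$, which gives $\#\mathcal{N}_{\bm{b}}(n)\le\#\mathcal{N}_{\bm{a}}(n+a_0)$. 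For condition~\cref{thm:criterion_irrationality_prepared:moderate_growth}, the choice of $z_n$ makes $q^{z_n}\ge y_n^{C\log q}$ eventually, so $\sum_{m<n}a(m)\le S_{\bm{u}}(n)=O(y_n)$ leads to $y_n\cdot y_n^{d-1}/(n\,q^{z_n})\le y_n^{d-C\log q}/n\to 0$ provided $C>d/\log q$. The interlace condition~\cref{thm:criterion_irrationality_original:interlace} is handled directly: for consecutive $m<m_+$ in $\mathcal{N}_{\bm{b}}\subseteq\mathcal{B}$ and $\mu\ge L$ with $m+\Delta\mu<m_+$, hypothesis \cref{cor:MinkowskiSum:interlace} furnishes some $\alpha\in\mathcal{A}\cap[\mu,\Delta\mu)$, and then $m+\alpha\in(\mathcal{A}+\mathcal{B})\cap[m+\mu,m+\Delta\mu)=\mathcal{N}_{\bm{a}}\cap[m+\mu,m+\Delta\mu)$.

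The main technical point is striking the balance between conditions \cref{thm:criterion_irrationality_original:sparsity} and \cref{thm:criterion_irrationality_prepared:moderate_growth}: the former forces $z_n$ to be small compared to $n/\#\mathcal{N}_{\bm{a}}(n)$, while the latter demands $q^{z_n}$ to be large enough to absorb the factor $y_n^d$, pinning $z_n$ to the order of $\log y_n$. The product-shaped hypothesis \cref{cor:MinkowskiSum:sparsity}, which simultaneously controls the sparsity of $\mathcal{A}+\mathcal{B}$ and the growth of the coefficient sums, is precisely the ingredient that makes both requirements hold for a common choice of $z_n$.
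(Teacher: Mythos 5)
Your proof is correct and follows essentially the same route as the paper's: you apply Theorem~\ref{thm:criterion_irrationality_prepared} with the same $\bm{a}$, $\bm{b}$, $x_n=n$, $y_n=\max\{S_{\bm{u}}(n),S_{\bm{v}}(n)\}$, the same shift argument to bound $\#\mathcal{N}_{\bm{b}}(n)$ by $\#\mathcal{N}_{\bm{a}}(n+a_0)$, and the same verification of the interlace condition. The only (cosmetic) difference is the choice of $z_n$: you take $z_n=\max\{1,C\log y_n\}$ with a fixed constant $C>d/\log q$, tuned so that $q^{z_n}$ just dominates $y_n^d$ in condition (iv-3), whereas the paper takes $z_n\asymp w_n^{-1/2}\log y_n$ with $w_n=\#\bigl((\mathcal{A}+\mathcal{B})\cap[1,n)\bigr)\cdot(\log y_n)/n\to0$, so that $z_n/\log y_n\to\infty$ and (iv-3) holds uniformly in $d$; both choices satisfy conditions (iii) and (iv-3) simultaneously.
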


\begin{example}\label{example:sum_of_two_powers}
Let $q$ be a Pisot or Salem number
and $\alpha,\beta>1$ be real numbers with $\alpha^{-1}+\beta^{-1}<1$.
Let $\bm{u}$ and $\bm{v}$ be sequences of algebraic integers of $\mathbb{Q}(q)$
such that $u(n)>0$ for $n\ge 1$ and 
\[
\log\max\{S_{\bm{u}}(n),S_{\bm{v}}(n)\}
=
o(n^{1-(\alpha^{-1}+\beta^{-1})})
\quad\text{as $n\to\infty$}
\]
and let
\[
\mathcal{A}
\coloneqq
\{\lfloor n^{\alpha}\rfloor\mid n\in\mathbb{Z}_{\ge0}\} 
\quad\text{and}\quad 
\mathcal{B}
\coloneqq
\{\lfloor n^{\beta}\rfloor\mid n\in\mathbb{Z}_{\ge0}\}.
\]
Then, we have 
\[
\sum_{n\in\mathcal{A}+\mathcal{B}}
\frac{u(n)}{q^n}
+
\sum_{n\in\mathcal{B}}
\frac{v(n)}{q^n}
\not\in\mathbb{Q}(q).
\]
For example, putting $q=1+\sqrt{2}$ and $\alpha=\beta=3$, we obtain  
\[
\sum_{n:\text{sum of two cubes}}
\frac{1}{(1+\sqrt{2})^n}
+
\sum_{n=0}^{\infty}
\frac{1}{(1+\sqrt{2})^{n^3}}
\notin\mathbb{Q}(\sqrt{2}). 
\]
\end{example}

\begin{corollary}\label{cor:degree_ell_irrationality}
Let $q>1$ be a Pisot or Salem number
and $(a(n))_{n=1}^{\infty}$ be a sequence of algebraic integers of $\mathbb{Q}(q)$
such that $a(n)\geq0$ for $n\ge1$ and $\#\mathcal{N}_{\bm{a}}=\infty$. 
Write the elements of $\mathcal{N}_{\bm{a}}$
as $n_1,n_2,\dots$ so that $(n_{k})_{k=1}^{\infty}$ is strictly increasing. 
Suppose that
\begin{equation}
\label{cor:degree_ell_irrationality:coefficient_bound}
\varlimsup_{k\to\infty}\house{a(n_{k})}^{\frac{1}{n_{k}}}<q
\end{equation}
and there exists a positive integer $\ell$ such that 
\begin{equation}\label{cor:degree_ell_irrationality:cond}
\varlimsup_{k\to \infty}
\frac{n_k}{k^\ell\log Q(n_{k})}
=
\infty
\end{equation}
with $Q(x)\coloneqq\max(x,\max_{m\le x}\house{a(m)})$.
Then, the number $\sum_{n=1}^{\infty}a(n)/{q^{n}}$
is transcendental or algebraic over $\mathbb{Q}(q)$ of degree at least $\ell+1$.
\end{corollary}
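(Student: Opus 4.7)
The plan is to argue by contradiction using \cref{thm:criterion_irrationality_prepared}. Suppose $\theta:=\sum_{n=1}^{\infty}a(n)/q^n$ is algebraic over $\mathbb{Q}(q)$ of degree $m$ with $1\le m\le\ell$. Clearing denominators, I obtain algebraic integers $c_0,\ldots,c_m\in\mathcal{O}_{\mathbb{Q}(q)}$ with $c_m\neq 0$ such that $\sum_{i=0}^{m}c_i\theta^i=0$; since $\mathbb{Q}(q)\subset\mathbb{R}$, flipping all signs if needed lets me assume $c_m>0$, so that
\[
\tau:=\sum_{i=1}^{m}c_i\theta^i=-c_0\in\mathbb{Q}(q).
\]
The strategy is then to apply \cref{thm:criterion_irrationality_prepared} to show $\tau\notin\mathbb{Q}(q)$, yielding a contradiction.

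Expanding $\theta^i=\sum_{N\ge 1}A_i(N)/q^N$ with the convolution coefficients $A_i(N):=\sum_{n_1+\cdots+n_i=N,\,n_j\in\mathcal{N}_{\bm{a}}}a(n_1)\cdots a(n_i)\ge 0$, I decompose $\tau=\sum_{N\ge 1}(\tilde a(N)+\tilde b(N))/q^N$ by setting
\[
\tilde a(N):=c_m A_m(N)\ge 0,\qquad \tilde b(N):=\sum_{i=1}^{m-1}c_iA_i(N).
\]
Both are sequences of algebraic integers of $\mathbb{Q}(q)$, and $\mathcal{N}_{\tilde{\bm a}}=(\mathcal{N}_{\bm{a}})^{+m}$ is infinite. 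Taking the subsequence $(k_j)$ realizing \eqref{cor:degree_ell_irrationality:cond}, I set $x_j:=n_{k_j}$, $K_j:=k_j$, and $y_j:=C(K_jQ(x_j))^m$ with $C$ absorbing the $\house{c_i}$. The multiplicativity $\house{\alpha\beta}\le\house{\alpha}\house{\beta}$ together with counting ordered $i$-tuples from $\mathcal{N}_{\bm{a}}\cap[1,x_j)$ yields $S_{\tilde{\bm a}}(x_j),S_{\tilde{\bm b}}(x_j)=O(y_j)$ and $|\mathcal{N}_{\tilde{\bm a}}(x_j)|,|\mathcal{N}_{\tilde{\bm b}}(x_j)|\le CK_j^m$. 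The hypothesis $K_j^\ell\log Q(x_j)=o(x_j)$ forces $\log y_j=o(x_j)$, and hence $y_j^{(d-1)/x_j}\to 1<q/\rho$, verifying~\textup{(iv-2)}; choosing $z_j$ of order $\log y_j$ then delivers~\textup{(iii)} (using $K_j^m\le K_j^\ell$) and~\textup{(iv-3)} simultaneously, while~\textup{(iv-1)} follows from \eqref{cor:degree_ell_irrationality:coefficient_bound} via the elementary estimate $\house{A_i(N)}\le N^{i-1}(\rho+\epsilon)^N$.

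The main obstacle is verifying the interlace condition~\textup{(v)}, since $\mathcal{N}_{\tilde{\bm b}}\subset\bigcup_{i<m}(\mathcal{N}_{\bm{a}})^{+i}$ is typically infinite. The essential combinatorial input is that $\mathcal{N}_{\tilde{\bm a}}=(\mathcal{N}_{\bm{a}})^{+m}$ is strictly denser than $\mathcal{N}_{\tilde{\bm b}}$, with $|\mathcal{N}_{\tilde{\bm a}}(x)|/|\mathcal{N}_{\tilde{\bm b}}(x)|\gtrsim|\mathcal{N}_{\bm{a}}\cap[1,x)|$, and that for any $m_b\in(\mathcal{N}_{\bm{a}})^{+i}$ with $i<m$ one has $m_b+(\mathcal{N}_{\bm{a}})^{+(m-i)}\subset\mathcal{N}_{\tilde{\bm a}}$, placing many elements of $\mathcal{N}_{\tilde{\bm a}}$ immediately to the right of $m_b$. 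Calibrating $\Delta$ and $L$ in terms of $n_1$ and $m$, these shifts should populate every admissible window $[m_b+\mu,m_b+\Delta\mu)$ fitting between consecutive elements of $\mathcal{N}_{\tilde{\bm b}}$, establishing~\textup{(v)}. Invoking \cref{thm:criterion_irrationality_prepared} then gives $\tau\notin\mathbb{Q}(q)$, contradicting $\tau=-c_0\in\mathbb{Q}(q)$ and completing the proof.
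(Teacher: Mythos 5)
Your overall architecture — reduce to a polynomial relation, expand $\theta^i$ via convolutions, split off the top-degree positive term, and invoke \cref{thm:criterion_irrationality_prepared} — matches the paper's, and the verifications of (iv-1), (iv-2), (iv-3), (ii), (iii) are essentially right (the paper normalizes to degree exactly $\ell$, you work with the exact degree $m\le\ell$; both are fine). But the verification of the interlace condition~(v) is a genuine gap, and the phrase ``these shifts should populate every admissible window'' is precisely where the argument breaks.

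The problem: condition~(v) demands that for \emph{every} $\mu\ge L$ with $m_b+\Delta\mu<m_b^+$, the window $[m_b+\mu,m_b+\Delta\mu)$ meets $\mathcal{N}_{\tilde{\bm a}}$, with $\Delta$ and $L$ fixed once and for all. If $\mathcal{N}_{\bm a}$ has unbounded multiplicative gaps (i.e.\ $\varlimsup_{k}n_{k+1}/n_k=\infty$, which is perfectly compatible with \cref{cor:degree_ell_irrationality:cond}), then $\mathcal{N}_{\tilde{\bm a}}=(\mathcal{N}_{\bm a})^{+m}$ inherits arbitrarily large multiplicative gaps, and windows of bounded multiplicative width $\Delta$ will sometimes miss it entirely. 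No calibration of $\Delta,L$ in terms of $n_1$ and $m$ can fix this, because the obstruction is asymptotic, not local. The paper resolves this by a prior case split: it first proves a separate Liouville-type transcendence criterion (\cref{lem:Liouville}) covering the lacunary case $\varlimsup_k n_{k+1}/n_k=\infty$, and only after reducing to the regime $n_{k+1}<\Delta n_k$ for all $k$ does it verify~(v), by choosing consecutive $n_\kappa,n_{\kappa+1}$ with $(\ell-j_0)n_\kappa<\mu\le(\ell-j_0)n_{\kappa+1}$ and setting $h=m_b+(\ell-j_0)n_{\kappa+1}$; the bound $n_{\kappa+1}<\Delta n_\kappa$ is exactly what lands $h$ in $[m_b+\mu,m_b+\Delta\mu)$. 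Your proof is missing this entire reduction step, so it does not actually establish~(v), and \cref{thm:criterion_irrationality_prepared} cannot be invoked as written.
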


By taking the contraposition of \cref{cor:degree_ell_irrationality},
we can deduce the following:\ 
under the setting of \cref{cor:degree_ell_irrationality}
with a stronger assumption $\log\house{a(n_{k})}=O(\log n_{k})$ as $k\to\infty$
instead of \cref{cor:degree_ell_irrationality:coefficient_bound},
if $\sum_{n=1}^{\infty}a(n)/{q^{n}}$ is algebraic of degree $\ell$ over $\mathbb{Q}(q)$,
then we have
\begin{equation}
\label{cor:degree_ell_irrationality:non_zero_digits}
\#\mathcal{N}_{\bm{a}}(n)
\gg
\frac{n^{1/\ell}}{(\log n)^{1/\ell}}
\quad\text{as $n\to\infty$}
\end{equation}
which is equivalent to the negation of \cref{cor:degree_ell_irrationality:cond}.
This estimate of the number of ``non-zero digits''
is a partial improvement of Theorem~2.2 of \cite{Kaneko:non_zero_digits_Padova},
where $a(n)$ is originally assumed to be bounded and taking values in the rational integers.
However, note also that Theorem 2.2 of \cite{Kaneko:non_zero_digits_Padova}
gives effective lower bounds for the implicit constant
in \cref{cor:degree_ell_irrationality:non_zero_digits}.

\begin{example}\label{example:polynomial_over_power}
Let $q>1$ be a Pisot or Salem number and 
$f(X)$ be a polynomial with coefficients in $\mathbb{Q}(q)$
and having positive leading coefficient.
Then, for any positive integer $\ell\geq2$, the numbers
\[
\sum_{n=1}^{\infty}
\frac{f(n)}{q^{n^{\ell+1}}}
\quad\text{and}\quad 
\sum_{p\text{:prime}}^{\infty}
\frac{f(p)}{q^{p^{\ell}}}
\]
are transcendental or algebraic over $\mathbb{Q}(q)$ of degree at least $\ell+1$. 
\end{example}

If $q\geq2$ is a rational integer,
then the conditions in Theorems~\ref{thm:criterion_irrationality_original} and 
\ref{thm:criterion_irrationality_prepared} become quite concise. 
Indeed, we can ignore all conditions and factors related to $y_{n}$ 
since every rational integer has degree $d=1$. 
In this case, \cref{thm:criterion_irrationality_prepared} takes the following form.

\begin{theorem}\label{thm:criterion_irrationality_rational}
Let $t\ge2$ be a rational integer
and $\bm{a}$, $\bm{b}$ be sequences of rational integers 
such that $a(n)\ge0$ for $n\geq1$, $\#\mathcal{N}_{\bm{a}}=\infty$ and 
\newcounter{thm:criterion_irrationality_rational:i}
\setcounter{thm:criterion_irrationality_rational:i}{0}
\begin{enumerate}[leftmargin=*]
\setcounter{enumi}{\value{thm:criterion_irrationality_rational:i}}
\item\label{thm:criterion_irrationality_rational:pointwise}
$\varlimsup_{n\to\infty}\max(a(n),|b(n)|)^{1/n}<t$.
\setcounter{thm:criterion_irrationality_rational:i}{\value{enumi}}
\end{enumerate}
Suppose that there exist sequences
$(x_n)_{n=1}^{\infty}$ and $(z_{n})_{n=1}^{\infty}$ of real numbers $\ge1$
satisfying the following conditions as $n\to\infty$\textup{:}
\begin{enumerate}[leftmargin=*]
\setcounter{enumi}{\value{thm:criterion_irrationality_rational:i}}
\item\label{thm:criterion_irrationality_rational:x_infty}
$x_{n}\to\infty$. 
\item\label{thm:criterion_irrationality_rational:moderate_growth}
$S_{\bm{a}}(x_{n}), S_{\bm{b}}(x_{n})
=
o(t^{z_{n}}x_{n})$. 
\item\label{thm:criterion_irrationality_rational:sparsity}
$\#\mathcal{N}_{\bm{a}}(x_{n}),\#\mathcal{N}_{\bm{b}}(x_{n})
=
o({x_{n}}/{z_{n}})$.
\setcounter{thm:criterion_irrationality_rational:i}{\value{enumi}}
\end{enumerate}
Moreover, if $\#\mathcal{N}_{\bm{b}}=\infty$, we assume 
condition~\cref{thm:criterion_irrationality_original:interlace}  
in \cref{thm:criterion_irrationality_original}. 
Then, $\sum_{n=1}^{\infty}{(a(n)+b(n))}/{t^{n}}$ is irrational. 
\end{theorem}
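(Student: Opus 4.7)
The plan is to deduce \cref{thm:criterion_irrationality_rational} as a direct specialization of \cref{thm:criterion_irrationality_prepared} to the case where the Pisot number $q$ is the rational integer $t$, so that $d=1$ and $\mathbb{Q}(q)=\mathbb{Q}$. Since every rational integer $\alpha$ satisfies $\house{\alpha}=|\alpha|$, the extended definition \cref{def:SNx_extended} of $S_{\bm{a}}$ agrees with the original one \cref{def:SNx}, and the rational integer sequences $\bm{a},\bm{b}$ are automatically sequences of algebraic integers in $\mathbb{Q}(t)=\mathbb{Q}$.

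The natural choice is to keep the given sequences $(x_{n})$ and $(z_{n})$, and to set $y_{n}:=\max\{S_{\bm{a}}(x_{n}),S_{\bm{b}}(x_{n}),1\}$, so that $y_{n}\ge 1$ and condition~\cref{thm:criterion_irrationality_original:S_house_bound} holds by construction. Condition~\cref{thm:criterion_irrationality_prepared:pointwise} of \cref{thm:criterion_irrationality_prepared} is then exactly \cref{thm:criterion_irrationality_rational:pointwise} of \cref{thm:criterion_irrationality_rational}, because $\house{a(n)}=a(n)$ and $\house{b(n)}=|b(n)|$. Conditions~\cref{thm:criterion_irrationality_original:x_infty} and~\cref{thm:criterion_irrationality_original:sparsity}, as well as the interlace condition~\cref{thm:criterion_irrationality_original:interlace}, are identical to \cref{thm:criterion_irrationality_rational:x_infty}, \cref{thm:criterion_irrationality_rational:sparsity} and the corresponding clause of \cref{thm:criterion_irrationality_rational}.

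The key simplification is that $d=1$ makes the factor $y_{n}^{d-1}$ equal to $1$ identically. Hence condition~\cref{thm:criterion_irrationality_prepared:yn_bound} reduces to $1<q/\rho$, which is exactly \cref{thm:criterion_irrationality_rational:pointwise}; and condition~\cref{thm:criterion_irrationality_prepared:moderate_growth} becomes $\sum_{m<x_{n}}a(m),\sum_{m<x_{n}}|b(m)|=o(t^{z_{n}}x_{n})$, which, using $a(m)\ge 0$ and $\house{b(m)}=|b(m)|$, coincides with \cref{thm:criterion_irrationality_rational:moderate_growth}. Thus all hypotheses of \cref{thm:criterion_irrationality_prepared} are met with $q=t$, and its conclusion gives $\sum_{n=1}^{\infty}(a(n)+b(n))/t^{n}\notin\mathbb{Q}(t)=\mathbb{Q}$. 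Since \cref{thm:criterion_irrationality_rational} is literally the $d=1$ case of \cref{thm:criterion_irrationality_prepared}, there is no genuine obstacle in the argument; the only point requiring care is the notational reconciliation in the previous paragraph, namely that for rational integer coefficients the extended $S_{\bm{a}}$ coincides with the elementary one and that the signed sum $\sum_{m<x_{n}}a(m)$ coincides with $S_{\bm{a}}(x_{n})$ thanks to $a(m)\ge 0$.
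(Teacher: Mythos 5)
Your proposal is correct and takes exactly the paper's intended route: the paper introduces \cref{thm:criterion_irrationality_rational} precisely as the $d=1$ specialization of \cref{thm:criterion_irrationality_prepared}, noting that all factors involving $y_{n}$ become trivial, and your choice $y_{n}:=\max\{S_{\bm{a}}(x_{n}),S_{\bm{b}}(x_{n}),1\}$ together with the observations that $\house{\,\cdot\,}=|\cdot|$ on $\mathbb{Z}$ and $\sum_{m<x_n}a(m)=S_{\bm{a}}(x_n)$ supplies exactly the bookkeeping needed to make that specialization rigorous.
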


Erd\H{o}s's irrationality criterion \cref{prethm:ErdosLemma4prime}
will be derived from \cref{thm:criterion_irrationality_rational}
in \cref{sec:proof_corollary}.

As applications of \cref{thm:criterion_irrationality_rational},
we obtain the following irrationality results. 
Let $\sigma(n)$ and $\phi(n)$ be the sum of divisor and Euler's totient functions
defined in \cref{def:sigma_phi}, respectively. 

\begin{corollary}
\label{cor:chimera}
Let $\bm{f}=(f(n))_{n=1}^{\infty}$ be a sequence of non-negative rational integers 
with $\#\mathcal{N}_{\bm{f}}=\infty$. 
Suppose that there exists a positive constant $\delta$ such that 
\begin{equation}
\label{cor:chimera:cond}
\sum_{n\le x}f(n)
=
O(x(\log x)^{\delta})
\quad\text{as $x\to\infty$}.
\end{equation}
Then, for any rational integer $t\geq2$, the numbers 
\begin{equation}
\label{cor:chimera:series}
\sum_{n=1}^{\infty}
\frac{f(n)}{t^{\sigma(n)}}
\quad\text{and}\quad
\sum_{n=1}^{\infty}
\frac{f(n)}{t^{\phi(n)}}
\end{equation}
are both irrational. 
\end{corollary}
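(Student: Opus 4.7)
The plan is to apply \cref{thm:criterion_irrationality_rational} to each of the two series in \cref{cor:chimera:series} in a uniform manner. I would reframe $\sum_{m=1}^{\infty} f(m)/t^{\sigma(m)}$ as $\sum_{n=1}^{\infty} a(n)/t^{n}$ by grouping terms according to the value of $\sigma$:
\[
a(n)\coloneqq\sum_{\substack{m\ge 1\\ \sigma(m)=n}}f(m),
\]
which is a finite non-negative integer because $\sigma(m)=n$ forces $m\le n$. Analogously, for the $\phi$-series I would set $a(n)\coloneqq\sum_{m\colon\phi(m)=n}f(m)$, which is finite thanks to the classical bound $m\ll\phi(m)\log\log(m+2)$. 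In both cases I would take $\bm{b}\equiv 0$, which disposes of the interlacing hypothesis~\cref{thm:criterion_irrationality_original:interlace}, and $\#\mathcal{N}_{\bm{a}}=\infty$ follows from $\#\mathcal{N}_{\bm{f}}=\infty$ together with $\sigma(m),\phi(m)\to\infty$.

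Next I would verify conditions~\cref{thm:criterion_irrationality_rational:pointwise}--\cref{thm:criterion_irrationality_rational:sparsity}. Combining the two inversion inequalities above with the hypothesis \cref{cor:chimera:cond} gives $a(n)=O(n(\log n)^{\delta+1})$, so $a(n)^{1/n}\to 1<t$, handling~\cref{thm:criterion_irrationality_rational:pointwise}. I would then choose $x_n\coloneqq n$ (any sequence tending to infinity works) and $z_n\coloneqq c\log\log x_n$ with a constant $c>0$ to be fixed. The moderate growth condition~\cref{thm:criterion_irrationality_rational:moderate_growth} follows because, for both $\psi\in\{\sigma,\phi\}$, the condition $\psi(m)<x_n$ forces $m\ll x_n\log\log x_n$, hence
\[
S_{\bm{a}}(x_n)\le\sum_{m\ll x_n\log\log x_n}f(m)\ll x_n(\log x_n)^{\delta+1}=o(t^{z_n}x_n)
\]
as soon as $c\log t>\delta+1$.

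The crux is the sparsity condition~\cref{thm:criterion_irrationality_rational:sparsity}: $\#\mathcal{N}_{\bm{a}}(x_n)=o(x_n/\log\log x_n)$. Since $\mathcal{N}_{\bm{a}}\cap[1,x_n)$ is contained in the image of $\sigma$ (resp.\ $\phi$) intersected with $[1,x_n)$, I would appeal to the classical quantitative sparsity estimates
\[
V_\sigma(x),\ V_\phi(x)\ll\frac{x}{(\log x)^{c_0}}
\]
for some absolute constant $c_0>0$, due to Erd\H{o}s (for $\phi$) and extended to $\sigma$ by Erd\H{o}s and Pomerance, where $V_{\psi}(x)\coloneqq\#(\{\psi(m)\colon m\ge 1\}\cap[1,x])$. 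This is much stronger than the required $o(x/\log\log x)$, so~\cref{thm:criterion_irrationality_rational:sparsity} holds, and \cref{thm:criterion_irrationality_rational} yields the desired irrationality of both series.

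The main obstacle is pinning down the correct quantitative sparsity for the images of $\sigma$ and $\phi$: Erd\H{o}s's bare density-zero statement $V_\sigma(x),V_\phi(x)=o(x)$ is just barely insufficient because the moderate growth input forces $z_n$ to be of size at least $\log\log x_n$. Once the power-of-logarithm saving is invoked, the remainder of the argument is essentially bookkeeping, and the uniformity between the $\sigma$- and $\phi$-cases means no separate treatment is needed beyond the different inversion bound $m\le n$ versus $m\ll n\log\log n$.
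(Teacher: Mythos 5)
Your proposal is correct and follows essentially the same route as the paper: regroup the series by the value of $\sigma$ or $\phi$ to get $a(n)=\sum_{g(m)=n}f(m)$, set $\bm{b}\equiv 0$, take $x_n=n$ and $z_n$ of order $\log\log n$, and apply \cref{thm:criterion_irrationality_rational}. The only difference is the reference invoked for the sparsity of the value sets: the paper cites Maier--Pomerance's bound $\#\mathcal{S}_g(x)\le\frac{x}{\log x}\exp(C(\log\log\log x)^2)$, while you invoke the weaker but equally sufficient $V_\psi(x)\ll x/(\log x)^{c_0}$ in the form going back to Erd\H{o}s; both are far stronger than the needed $o(x/\log\log x)$, so this is only a bibliographic variant, not a mathematical one.
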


\begin{example}\label{example:chimera}
Let $d(n)$ for $n\ge 1$ denote the number of divisors of $n$.  
Then, for a fixed non-negative integer $k$,
we have $\sum_{n\le x}d(n)^{k}=O(x(\log x)^{2^k-1})$ as $x\to\infty$ 
(cf. \cite[p.~61]{MV:text}).  
Hence, for any rational integers $t\ge2$ and $k\geq0$, the numbers 
\[
\sum_{n=1}^{\infty}
\frac{d(n)^k}{t^{\sigma(n)}}
\quad\text{and}\quad
\sum_{n=1}^{\infty}
\frac{d(n)^k}{t^{\phi(n)}}
\]
are both irrational. 
\end{example}

Note that \cref{cor:chimera} with $k=0$ gives the irrationality of the numbers \cref{sigma_phi}. 
\begin{example}\label{example:minus_omega_Omega}
For a positive integer $n$,
let $\omega(n)$ be the number of distinct prime factors of $n$
and $\Omega(n)$ be the number of prime factors of $n$ counted with multiplicities. 
Then the two sequences
$f(n)\coloneqq2^{\omega(n)},2^{\Omega(n)}$
for $n\ge 1$ satisfy \cref{cor:chimera:cond} with $\delta=2$ 
(see \cite[p.~42]{MV:text} for precise asymptotic formulae).
Hence, the numbers 
\[
\sum_{n=1}^{\infty}
\frac{1}{2^{\sigma(n)-\omega(n)}},\quad
\sum_{n=1}^{\infty}
\frac{1}{2^{\sigma(n)-\Omega(n)}},\quad
\sum_{n=1}^{\infty}
\frac{1}{2^{\phi(n)-\omega(n)}},\quad
\sum_{n=1}^{\infty}
\frac{1}{2^{\phi(n)-\Omega(n)}}
\]
are all irrational. 
\end{example}

The present paper is organized as follows. 
In \cref{sec:proof_irrationality},
we first prepare some lemmas, and then prove \cref{thm:criterion_irrationality_original}. 
The proof of \cref{thm:criterion_irrationality_original} is based on 
a refinement of Erd\H{o}s's argument in \cite{Erdos:PowerSeriesIrrationalitySigmaPhi}. 
In \cref{subsec:deduction_prepared}, we derive \cref{thm:criterion_irrationality_prepared}
as a consequence of \cref{thm:criterion_irrationality_original}. 
\cref{sec:addcor} is devoted to the proofs of 
\cref{cor:MinkowskiSum,cor:degree_ell_irrationality}. 
In \cref{sec:proof_corollary}, we prove Erd\H{o}s's \cref{prethm:ErdosLemma4prime} and \cref{cor:chimera}. 
Throughout this paper,
let $q>1$ be a fixed Pisot or Salem number of degree $d$ over $\mathbb{Q}$.

\section{Proof of \cref{thm:criterion_irrationality_original}}
\label{sec:proof_irrationality}

For a sequence 
$\bm{c}=(c(n))_{n=1}^{\infty}$ of complex numbers
and for an integer $N\ge1$, we define 
\begin{equation}\label{def:xi_N}
\xi(\bm{c})
\coloneqq 
\sum_{n=1}^{\infty}\frac{c(n)}{q^{n}},
\end{equation}
provided that $\xi(\bm{c})$ is convergent, and 
\begin{equation}
\label{def:xiN}
\xi_{N}(\bm{c})
\coloneqq 
q^{N}
\biggl(
\xi(\bm{c})-\sum_{n<N}\frac{c(n)}{q^{n}}
\biggr)
=
q^{N}\sum_{n\ge N}\frac{c(n)}{q^{n}}
=
\sum_{j\ge 0}\frac{c(N+j)}{q^{j}}.
\end{equation}
For brevity, write $|\bm{c}|\coloneqq(|c(n)|)_{n=1}^{\infty}$. 
The convergence of $\xi(|\bm{c}|)$
is equivalent to the absolute convergence of $\xi(\bm{c})$. 
We first prove the following \cref{lem:fundamental_criterion}
which plays an essential role in the proof of \cref{thm:criterion_irrationality_original}. 

\begin{lemma}
\label{lem:fundamental_criterion}
Let $\bm{a}$ and $\bm{b}$ be sequences of algebraic integers of $\mathbb{Q}(q)$
such that $a(n)\ge0$ for $n\ge1$, $\#\mathcal{N}_{\bm{a}}=\infty$ and 
the series $\xi(\bm{a}), \xi(|\bm{b}|)$ are convergent. 
Suppose that there exist infinitely many integers $N\ge1$
satisfying $\xi_{N}(\bm{a})>\xi_{N}(|\bm{b}|)$, 
and moreover, we assume
\begin{equation}
\label{lem:fundamental_criterion:decaying_tail}
\varliminf_{\substack{
N\to\infty\\
\xi_{N}(\bm{a})>\xi_{N}(|\bm{b}|)
}}
\xi_{N}(|\bm{a}+\bm{b}|)\cdot S_{\bm{a}+\bm{b}}(N)^{d-1}
=
0,
\end{equation}
where $\bm{a}+\bm{b}\coloneqq(a(n)+b(n))_{n=1}^{\infty}$. 
Then we have $\xi(\bm{a}+\bm{b})\not\in \mathbb{Q}(q)$.
\end{lemma}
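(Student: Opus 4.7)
The plan is to argue by contradiction. Suppose $\alpha \coloneqq \xi(\bm{a}+\bm{b}) \in \mathbb{Q}(q)$, and choose a positive rational integer $D$ such that $D\alpha$ is an algebraic integer of $\mathbb{Q}(q)$. Directly from \cref{def:xiN} we have
\[
\xi_{N}(\bm{a}+\bm{b}) = q^{N}\alpha - \sum_{n<N}(a(n)+b(n))\,q^{N-n},
\]
so that $\gamma_{N} \coloneqq D\,\xi_{N}(\bm{a}+\bm{b})$ is an algebraic integer of $\mathbb{Q}(q)$. The strategy is to combine a sharp upper bound on $|\gamma_{N}|$ and on the non-identity conjugates $|\sigma(\gamma_{N})|$ with the inequality $|\gamma_{N}| \geq \house{\gamma_{N}}^{-(d-1)}$, which holds for any nonzero algebraic integer of $\mathbb{Q}(q)$ because its absolute norm is at least $1$.

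Nonvanishing and an archimedean upper bound are immediate along the subsequence of $N$ with $\xi_{N}(\bm{a}) > \xi_{N}(|\bm{b}|)$: the positivity $a(n) \geq 0$ gives
\[
0 < \xi_{N}(\bm{a}) - \xi_{N}(|\bm{b}|) \leq \xi_{N}(\bm{a}+\bm{b}) \leq \xi_{N}(|\bm{a}+\bm{b}|),
\]
so $\gamma_{N} \neq 0$ and $|\gamma_{N}| \leq D\,\xi_{N}(|\bm{a}+\bm{b}|)$. For any embedding $\sigma\colon \mathbb{Q}(q) \hookrightarrow \overline{\mathbb{Q}}$ distinct from the identity, the Pisot or Salem hypothesis gives $|\sigma(q)| \leq 1$, so applying $\sigma$ to the displayed formula for $\xi_{N}(\bm{a}+\bm{b})$ and bounding each summand by its house yields
\[
|\sigma(\gamma_{N})| \leq D\,|\sigma(\alpha)| + D\sum_{n<N}\house{a(n)+b(n)} = D\bigl(|\sigma(\alpha)| + S_{\bm{a}+\bm{b}}(N)\bigr).
\]

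Taking the maximum over all embeddings gives $\house{\gamma_{N}} \ll 1 + |\gamma_{N}| + S_{\bm{a}+\bm{b}}(N)$, with an implicit constant depending only on $D$ and $\alpha$. Since $\xi(\bm{a})$ and $\xi(|\bm{b}|)$ are convergent, $|\gamma_{N}| \to 0$ along the subsequence; then $|\gamma_{N}|\cdot\house{\gamma_{N}}^{d-1} \geq 1$ forces $\house{\gamma_{N}} \to \infty$, which in turn forces $S_{\bm{a}+\bm{b}}(N) \to \infty$. Consequently, for all sufficiently large $N$ in the subsequence the bound simplifies to $\house{\gamma_{N}} \ll S_{\bm{a}+\bm{b}}(N)$, and plugging both sides of $|\gamma_{N}| \geq \house{\gamma_{N}}^{-(d-1)}$ into the previous upper bounds yields
\[
\xi_{N}(|\bm{a}+\bm{b}|) \cdot S_{\bm{a}+\bm{b}}(N)^{d-1} \gg 1
\]
along the same subsequence, which directly contradicts \cref{lem:fundamental_criterion:decaying_tail}.

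The main technical delicacy is the Salem case, where some Galois conjugate $\sigma(q)$ has absolute value exactly $1$: there is no geometric decay of $|\sigma(q)|^{N-n}$ to exploit, and one must settle for the crude bound $|\sigma(q)| \leq 1$ together with the bookkeeping provided by $S_{\bm{a}+\bm{b}}(N)$. Everything else is a clean application of the norm inequality for nonzero algebraic integers of $\mathbb{Q}(q)$, combined with the convergence of $\xi(\bm{a})$ and $\xi(|\bm{b}|)$ to ensure $|\gamma_{N}| \to 0$.
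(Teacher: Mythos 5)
Your overall approach matches the paper's: argue by contradiction, clear denominators, exploit that the absolute norm of a nonzero algebraic integer is at least $1$, and bound the identity conjugate by $\xi_{N}(|\bm{a}+\bm{b}|)$ and the non-identity conjugates by $O(S_{\bm{a}+\bm{b}}(N))$ using $|q^{\sigma}|\le1$. However, there is a genuine gap in the step where you claim that ``since $\xi(\bm{a})$ and $\xi(|\bm{b}|)$ are convergent, $|\gamma_{N}|\to0$ along the subsequence.'' This is false: by definition $\xi_{N}(\bm{c})=q^{N}\sum_{n\ge N}c(n)/q^{n}=\sum_{j\ge0}c(N+j)/q^{j}$ is $q^{N}$ times the tail, and convergence of $\xi(\bm{c})$ does not force this quantity to tend to $0$. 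For instance, with $c(n)\equiv1$ and $q$ an integer $\ge2$, $\xi(\bm{c})$ converges yet $\xi_{N}(\bm{c})=q/(q-1)$ for every $N$. You then use this unfounded claim to deduce $\house{\gamma_{N}}\to\infty$ and from there $S_{\bm{a}+\bm{b}}(N)\to\infty$, which is what lets you absorb the $1+|\gamma_{N}|$ into $S_{\bm{a}+\bm{b}}(N)$ and conclude $\house{\gamma_{N}}\ll S_{\bm{a}+\bm{b}}(N)$. As written, that chain does not hold.

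The needed fact --- $S_{\bm{a}+\bm{b}}(N)\ge1$ for all large $N$ --- should be established directly, as the paper does: whenever $\xi_{N}(\bm{a})>\xi_{N}(|\bm{b}|)$, if all $a(n)+b(n)$ with $n\ge N$ vanished then $\xi_{N}(\bm{a})=-\xi_{N}(\bm{b})\le\xi_{N}(|\bm{b}|)$, a contradiction; so some $n\ge N$ has $a(n)+b(n)\ne0$, and since there are infinitely many such $N$, eventually some index $n_0<N$ satisfies $a(n_0)+b(n_0)\ne0$, giving $S_{\bm{a}+\bm{b}}(N)\ge\house{a(n_0)+b(n_0)}\ge1$. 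With that in hand, your inequality $|\gamma_{N}|\ge\house{\gamma_{N}}^{-(d-1)}$ finishes the argument (one should also note that when the identity embedding realizes the house this inequality only yields $|\gamma_{N}|\ge1$, but together with $|\gamma_{N}|\le D\,\xi_{N}(|\bm{a}+\bm{b}|)$ and $S_{\bm{a}+\bm{b}}(N)\ge1$ that still gives the required $\xi_{N}(|\bm{a}+\bm{b}|)\cdot S_{\bm{a}+\bm{b}}(N)^{d-1}\gg1$). The paper avoids these case distinctions by directly writing $1\le|N_{K/\mathbb{Q}}(u\xi_{N}(\bm{a}+\bm{b}))|$ as the product of the identity conjugate and the $d-1$ non-identity conjugates, rather than detouring through the house of $\gamma_{N}$.
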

\begin{proof} 
Let $\mathcal{O}_{K}$ be the ring of integers in $K\coloneqq\mathbb{Q}(q)$. 
Assume to the contrary that $\xi(\bm{a}+\bm{b})\in\mathbb{Q}(q)$
and put $\xi(\bm{a}+\bm{b})=\gamma/u$,
where $\gamma\in \mathcal{O}_{K}$ and $u\in\mathbb{Z}_{\ge1}$. 
Let $N$ be an integer satisfying $\xi_{N}(\bm{a})>\xi_{N}(|\bm{b}|)$.  
Then we have 
\[
|\xi_{N}(\bm{a}+\bm{b})|
\ge 
\xi_{N}(\bm{a})-\xi_{N}(|\bm{b}|)
>
0
\]
and
\begin{equation}\label{lem:fundamental_criterion:ux_N}
u\xi_{N}(\bm{a}+\bm{b})
=
q^{N}
\gamma
-
u\sum_{n<N}(a(n)+b(n))q^{N-n}
\in\mathcal{O}_{K}.
\end{equation}
Hence, by taking the norm with respect to the extension $K/\mathbb{Q}$, we obtain 
\begin{equation}\label{lem:fundamental_criterion:norm}
|N_{K/\mathbb{Q}}(u \xi_{N}(\bm{a}+\bm{b}))|\ge1.
\end{equation}
On the other hand, by \cref{def:xiN}, we have
\begin{equation}\label{lem:fundamental_criterion:error}
|u\xi_{N}(\bm{a}+\bm{b})|
\le
uq^{N}
\sum_{n\ge N}
\frac{|a(n)+b(n)|}{q^{n}}
=
u\xi_{N}(|\bm{a}+\bm{b}|).
\end{equation}
Moreover, since $\xi_{N}(\bm{a})>\xi_{N}(|\bm{b}|)$,
there exists an integer $n\ge N$ such that $a(n)+b(n)\neq0$, 
and hence, $S_{\bm{a}+\bm{b}}(N)\geq1$ for every large integer $N$. 
Thus, for any embedding $\sigma:\mathbb{Q}(q)\to \mathbb{C}$ with $\sigma(q)\neq q$,
by \cref{lem:fundamental_criterion:ux_N}, we obtain
\begin{align}
|(u \xi_{N}(\bm{a}+\bm{b}))^\sigma|
&=
\biggl|(q^\sigma)^{N}\gamma^\sigma
-u\sum_{n<N}
(a(n)+b(n))^\sigma(q^\sigma)^{N-n}
\biggr|\\
&\le
\house{\gamma}
+
u\sum_{n<N}\house{a(n)+b(n)}\\
&=
O(S_{\bm{a}+\bm{b}}(N))
\quad\text{as $N\to \infty$}\label{lem:fundamental_criterion:error_conjugate},
\end{align}
where we used $|q^\sigma|\leq 1$ since $q$ is a Pisot or Salem number. 
Therefore, by
\cref{lem:fundamental_criterion:norm},
\cref{lem:fundamental_criterion:error} and
\cref{lem:fundamental_criterion:error_conjugate},
\[
1
\le
|N_{K/\mathbb{Q}}(u\xi_{N}(\bm{a}+\bm{b}))|
=
\prod_{\sigma}
|(u\xi_{N}(\bm{a}+\bm{b}))^{\sigma}|
=
O(
\xi_{N}(|\bm{a}+\bm{b}|)
\cdot
S_{\bm{a}+\bm{b}}(N)^{d-1}
)
\quad\text{as $N\to\infty$},
\]
which contradicts \cref{lem:fundamental_criterion:decaying_tail}. 
The proof of \cref{lem:fundamental_criterion} is completed. 
\end{proof}

In what follows, for brevity, we write \cref{def:R} as 
\[
R_{\bm{a}}(x,z)\coloneqq 
R_{\bm{a}}(q,x,z)
=
\sum_{n<x}
\sum_{j\ge z}
\frac{|a(n+j)|}{q^{j}}.
\]

\begin{lemma}
\label{lem:small_tail}
Let $\bm{c}=(c(n))_{n=1}^{\infty}$ be a sequence of complex numbers. 
Suppose that there exist sequences
$(x_n)_{n=1}^{\infty}$, $(w_{n})_{n=1}^{\infty}$, $(z_{n})_{n=1}^{\infty}$
of real numbers $\ge1$ and a constant $\eta\in(0,1]$
satisfying the following conditions as $n\to\infty$\textup{:}
\begin{enumerate}[leftmargin=*]
\item\label{lem:small_tail:x_infty}
$x_{n}\to\infty$.
\item\label{lem:small_tail:sparsity}
$\#\mathcal{N}_{\bm{c}}(x_{n})=o\left({x_{n}}/{z_{n}}\right)$.
\item\label{lem:small_tail:average_decay}
$R_{\bm{c}}(\eta x_{n},z_{n})
=
o(x_{n}/w_{n})$.
\end{enumerate}
Then, for any fixed constant $\delta>0$, we have
\[
\#\{
N\in[1,\eta x_{n})\cap\mathbb{Z}
\mid
w_n \xi_{N}(|\bm{c}|)<\delta
\}
=
\eta x_n+o(x_{n})
\quad\text{as $n\to\infty$}.
\]
\end{lemma}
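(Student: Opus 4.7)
The plan is to decompose the tail $\xi_N(|\bm{c}|) = T_1(N) + T_2(N)$ at depth $z_n$, where
\[
T_1(N) \coloneqq \sum_{0 \leq j < z_n} \frac{|c(N+j)|}{q^{j}}, \qquad T_2(N) \coloneqq \sum_{j \geq z_n} \frac{|c(N+j)|}{q^{j}},
\]
and to bound the size of each ``bad'' set $\{N \in [1, \eta x_n) \cap \mathbb{Z} : w_n T_i(N) \geq \delta/2\}$ by $o(x_n)$ for $i = 1, 2$ separately.

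If $\#\mathcal{N}_{\bm{c}} < \infty$ the claim is trivial since $\xi_N(|\bm{c}|) = 0$ for all large $N$; otherwise \cref{lem:small_tail:x_infty} forces $\#\mathcal{N}_{\bm{c}}(x_n) \to \infty$, and \cref{lem:small_tail:sparsity} then forces $z_n = o(x_n)$. The far-tail bound is immediate from \cref{lem:small_tail:average_decay}: $\sum_{N < \eta x_n} T_2(N) = R_{\bm{c}}(\eta x_n, z_n) = o(x_n/w_n)$, so Markov's inequality yields at most $o(x_n)$ bad indices for $T_2$. For the near tail, the key observation is that $T_1(N)$ vanishes unless $[N, N+z_n) \cap \mathcal{N}_{\bm{c}} \neq \emptyset$, and the set of such $N \in [1, \eta x_n)$ is contained in $\bigcup_{m \in \mathcal{N}_{\bm{c}} \cap [1, \eta x_n + z_n)} (m - z_n, m]$, hence has cardinality at most $z_n \cdot \#\mathcal{N}_{\bm{c}}(\eta x_n + z_n)$. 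Combined with \cref{lem:small_tail:sparsity} and $z_n = o(x_n)$, this should give $o(x_n)$.

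The only subtlety is matching the interval $[1, \eta x_n + z_n)$ arising in the near-tail count with the interval $[1, x_n)$ controlled by \cref{lem:small_tail:sparsity}, most delicately when $\eta = 1$. I would resolve this by a standard $\epsilon$-trick: fix $\epsilon > 0$; for $n$ large enough that $z_n \leq \epsilon x_n$ one has $(1-\epsilon)x_n + z_n \leq x_n$, so the near-tail bad set within $[1, (1-\epsilon)x_n)$ has size at most $z_n \cdot \#\mathcal{N}_{\bm{c}}(x_n) = z_n \cdot o(x_n/z_n) = o(x_n)$, while the leftover range $[(1-\epsilon)x_n, \eta x_n)$ contributes at most $\epsilon x_n$ indices; sending $\epsilon \to 0$ after taking $\limsup$ finishes the argument. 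This boundary manipulation is the only non-mechanical step in the proof.
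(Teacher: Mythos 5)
Your proposal is correct and follows essentially the same route as the paper's: both decompose the tail at depth $z_n$, control the far tail via Markov's inequality and condition \cref{lem:small_tail:average_decay}, and control the near tail by counting $N$ for which some coefficient $c(N+j)$ with $j<z_n$ is nonzero, using condition \cref{lem:small_tail:sparsity}. The only cosmetic difference is the boundary handling at $\eta x_n$: the paper cuts directly at $\eta x_n - z_n$ and observes the remaining interval has length $O(z_n)=o(x_n)$, which is a touch more direct than your $\epsilon$-trick, though both are valid.
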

\begin{proof} 
We may assume $\#\mathcal{N}_{\bm{c}}(x_n)\ge1$ for large $n$
since otherwise $\xi_{N}(|\bm{c}|)=0$ for $N\ge1$ by condition~\cref{lem:small_tail:x_infty}
and the assertion is clear.  
Let $n$ be an integer sufficiently large and
\[
\mathscr{I}
\coloneqq
[1,\eta x_{n})\cap\mathbb{Z}.
\]  
Let $\delta>0$ be a fixed constant and       
\begin{align}
\mathscr{I}_{\mathrm{good}}
&\coloneqq
\{N\in\mathscr{I}
\mid
w_n \xi_{N}(|\bm{c}|)<\delta
\},\\
\mathscr{I}_{1}
&\coloneqq
\{N\in\mathscr{I}
\mid
c(N+j)=0\ \text{for all integers $j\in[0,z_{n})$}
\},\\
\mathscr{I}_{2}
&\coloneqq
\{N\in\mathscr{I}
\mid
c(N+j)\neq0\ \text{for some integer $j\in[0,z_{n})$}
\}. 
\end{align}
Then we have    
\begin{equation}\label{lem:small_tail:decomp}
\#\mathscr{I}_{\mathrm{good}}
=
\#\mathscr{I}
-
\#(\mathscr{I}_{1}\setminus\mathscr{I}_{\mathrm{good}})
-
\#(\mathscr{I}_{2}\setminus\mathscr{I}_{\mathrm{good}})
\end{equation}
since $\mathscr{I}=\mathscr{I}_{1}\sqcup \mathscr{I}_{2}$. 
Since $N\in\mathscr{I}\setminus\mathscr{I}_{\mathrm{good}}$
implies $w_{n}\xi_{N}(|\bm{c}|)\ge\delta$,
by recalling \cref{def:xiN}, we have
\begin{align}
\#(\mathscr{I}_{1}\setminus \mathscr{I}_{\mathrm{good}})
&=
\sum_{N\in\mathscr{I}_{1}\setminus \mathscr{I}_{\mathrm{good}}}
1\\
&\le
\sum_{N\in\mathscr{I}_{1}\setminus \mathscr{I}_{\mathrm{good}}}
\delta^{-1}w_n\xi_{N}(|\bm{c}|)
=
\delta^{-1}w_{n}
\sum_{N\in\mathscr{I}_{1}\setminus \mathscr{I}_{\mathrm{good}}}
\sum_{j\ge 0}\frac{|c(N+j)|}{q^{j}}.
\end{align}
Since $N\in\mathscr{I}_{1}$ implies $c(N+j)=0$ for $j\in[0,z_{n})$,
by condition~\cref{lem:small_tail:average_decay} and $\mathscr{I}_{1}\subset[1,\eta x_{n})$, we obtain 
\begin{align}
\#(\mathscr{I}_{1}\setminus\mathscr{I}_{\mathrm{good}})
&\le
\delta^{-1}w_n\sum_{N\in\mathscr{I}_{1}\setminus\mathscr{I}_{\mathrm{good}}}
\sum_{j\ge z_{n}}
\frac{|c(N+j)|}{q^{j}}\\
&\le
\delta^{-1}w_n\sum_{N<\eta x_{n}}
\sum_{j\ge z_{n}}
\frac{|c(N+j)|}{q^{j}}\\
&=
\delta^{-1}w_n\cdot R_{\bm{c}}(\eta x_{n},z_{n})
=
o(x_n)
\quad\text{as $n\to\infty$}.
\label{lem:small_tail:T_H}
\end{align}
Moreover, by recalling the definition of $\mathscr{I}_{2}$, we have
\begin{align}
\#(\mathscr{I}_{2}\cap[0,\eta x_{n}-z_{n}))
&=
\sum_{\substack{
N<\eta x_{n}-z_{n}\\
c(N+j)\neq0\ (\exists j\in[0,z_{n})\cap\mathbb{Z})
}}
1\\
&\le
\sum_{N<\eta x_{n}-z_{n}}
\sum_{\substack{
0\le j<z_{n}\\
c(N+j)\neq0
}}
1
=
\sum_{0\le j<z_{n}}
\sum_{\substack{
N<\eta x_{n}-z_{n}\\
c(N+j)\neq0
}}
1\\
&\le
(z_{n}+1)\#\mathcal{N}_{\bm{c}}(\eta x_{n})
\le
2z_{n}\#\mathcal{N}_{\bm{c}}(x_{n}).
\end{align}
Thus,
by conditions~\cref{lem:small_tail:x_infty} and \cref{lem:small_tail:sparsity},
we have
\begin{align}
\#(\mathscr{I}_{2}\setminus\mathscr{I}_{\mathrm{good}})
\le 
\#\mathscr{I}_{2}
&=
\#(\mathscr{I}_{2}\cap[0,\eta x_{n}-z_{n}))
+
\#(\mathscr{I}_{2}\cap[\eta x_{n}-z_{n},\eta x_{n}))\\
&\le
2z_{n}\#\mathcal{N}_{\bm{c}}(x_{n})+z_{n}+1
\le
4z_{n}\#\mathcal{N}_{\bm{c}}(x_{n})=o(x_n)\quad\text{as $n\to\infty$}, \label{lem:small_tail:E}
\end{align}
where at the last inequality, we used $\#\mathcal{N}_{\bm{c}}(x_n)\ge1$ for every large integer $n$.
Hence \cref{lem:small_tail} follows from \cref{lem:small_tail:decomp} together with 
\cref{lem:small_tail:T_H}, \cref{lem:small_tail:E} and $\#\mathscr{I}=\eta x_n+O(1)$ as $n\to\infty$. 
\end{proof}

\begin{lemma}
\label{lem:perturbation_removal}
Let $\bm{a}$ and $\bm{b}$ be sequences of algebraic integers of $\mathbb{Q}(q)$
such that $\#\mathcal{N}_{\bm{a}}=\infty$. 
Suppose that there exist sequences
$(x_n)_{n=1}^{\infty}$, $(y_n)_{n=1}^{\infty}$, $(z_{n})_{n=1}^{\infty}$
of real numbers $\ge1$ and a constant $\eta\in(0,1]$
satisfying the following conditions as $n\to\infty$\textup{:}
\newcounter{lem:perturbation_removal:i}
\setcounter{lem:perturbation_removal:i}{0}
\begin{enumerate}[leftmargin=*]
\setcounter{enumi}{\value{lem:perturbation_removal:i}}
\item\label{lem:perturbation_removal:x_infty}
$x_{n}\to\infty$.
\item\label{lem:perturbation_removal:S_house_bound}
$S_{\bm{a}}(x_{n})=O(y_{n})$.
\item\label{lem:perturbation_removal:sparsity}
$\#\mathcal{N}_{\bm{b}}(x_{n})=o\left({x_{n}}/{z_{n}}\right)$.
\item\label{lem:perturbation_removal:average_decay}
$R_{\bm{b}}(\eta x_{n},z_{n})
=
o(x_{n}/y_{n}^{d-1})$.
\setcounter{lem:perturbation_removal:i}{\value{enumi}}
\end{enumerate}
Moreover, when $\#\mathcal{N}_{\bm{b}}=\infty$, 
we assume that 
there exist constants $\Delta,L>1$ satisfying the condition
\begin{enumerate}
\setcounter{enumi}{\value{lem:perturbation_removal:i}}
\item\label{lem:perturbation_removal:interlace}
For any consecutive indices $m<m_{+}$ in $\mathcal{N}_{\bm{b}}$
and any real number $\mu\ge L$ with $m+\Delta\mu<m_{+}$,
we have $\mathcal{N}_{\bm{a}}\cap [m+\mu,m+\Delta\mu)\neq\emptyset$.
\end{enumerate}
Then there exists a constant $C>0$ such that 
\begin{equation}\label{lem:perturbation_removal:lower_bound}
\#\{
N\in[1,\eta x_{n})\cap\mathbb{Z}
\mid
\xi_{N}(|\bm{a}|)>\xi_{N}(|\bm{b}|)
\}
\ge
Cx_{n}
\end{equation}
holds for every large integer $n$. 
\end{lemma}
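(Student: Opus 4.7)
The plan is to split into two cases according to whether $\#\mathcal{N}_{\bm{b}}$ is finite or infinite. In the finite case, for every $N>\max\mathcal{N}_{\bm{b}}$ one has $\xi_{N}(|\bm{b}|)=0<\xi_{N}(|\bm{a}|)$ (using $\#\mathcal{N}_{\bm{a}}=\infty$), so every integer $N\in(\max\mathcal{N}_{\bm{b}},\eta x_{n})$ is good; for large $n$ this gives at least $(\eta/2)x_{n}$ such indices, which settles the claim with $C=\eta/2$.

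In the remaining case $\#\mathcal{N}_{\bm{b}}=\infty$, where the interlacing hypothesis~\cref{lem:perturbation_removal:interlace} is available, I first invoke \cref{lem:small_tail} with $\bm{c}=\bm{b}$ and weight $w_{n}=y_{n}^{d-1}$. Its three hypotheses match the present conditions~\cref{lem:perturbation_removal:x_infty}, \cref{lem:perturbation_removal:sparsity} and \cref{lem:perturbation_removal:average_decay}, so for any fixed $\delta>0$ the set $\mathscr{G}_{b}\coloneqq\{N\in[1,\eta x_{n}):\xi_{N}(|\bm{b}|)<\delta/y_{n}^{d-1}\}$ has size $\eta x_{n}+o(x_{n})$. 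The task reduces to exhibiting a positive density of $N\in\mathscr{G}_{b}$ on which $\xi_{N}(|\bm{a}|)>\xi_{N}(|\bm{b}|)$.

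The structural ingredient is a cell decomposition. Between consecutive elements $m<m_{+}$ of $\mathcal{N}_{\bm{b}}$ the sequence $\bm{b}$ vanishes, hence $\xi_{N}(|\bm{b}|)=q^{-(m_{+}-N)}\xi_{m_{+}}(|\bm{b}|)$ for $N\in(m,m_{+}]$, and analogously $\xi_{N}(|\bm{a}|)=q^{-(n_{k+1}-N)}\xi_{n_{k+1}}(|\bm{a}|)$ on each $\bm{a}$-gap $(n_{k},n_{k+1}]$. Consequently the ratio $\xi_{N}(|\bm{a}|)/\xi_{N}(|\bm{b}|)$ is equal to the constant $q^{m_{+}-n_{k+1}}\,\xi_{n_{k+1}}(|\bm{a}|)/\xi_{m_{+}}(|\bm{b}|)$ on every cell $(n_{k},n_{k+1}]\cap(m,m_{+}]$, so the cell is uniformly good or uniformly bad. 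Iterating the interlacing hypothesis along $\mu=L,L\Delta,L\Delta^{2},\dots$ populates each long $\bm{b}$-gap with $\mathcal{N}_{\bm{a}}$-elements at geometric scales, so cells whose right endpoint $n_{k+1}$ sits well before $m_{+}$ carry a multiplier $q^{m_{+}-n_{k+1}}$ large enough to dominate $\xi_{m_{+}}(|\bm{b}|)/\xi_{n_{k+1}}(|\bm{a}|)$. The denominator is bounded below via $\xi_{n_{k+1}}(|\bm{a}|)\ge|a(n_{k+1})|\ge1/\house{a(n_{k+1})}^{d-1}\ge c/y_{n}^{d-1}$ from condition~\cref{lem:perturbation_removal:S_house_bound} and the Pisot/Salem property of $q$; the numerator is controlled via the recursion $\xi_{m_{+}}(|\bm{b}|)=|b(m_{+})|+q^{-1}\xi_{m_{+}+1}(|\bm{b}|)$ together with \cref{lem:small_tail} applied at the non-$\mathcal{N}_{\bm{b}}$ index $m_{+}+1$.

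The hardest step will be the combinatorial accounting. In each $\bm{b}$-gap only a right-end stretch, of length controlled by $\log\xi_{m_{+}}(|\bm{b}|)$ and $\log y_{n}$, can consist of bad cells; summing this loss across the $o(x_{n}/z_{n})$ many $\bm{b}$-gaps, and exploiting the coupling between $z_{n}$ and $y_{n}$ encoded in condition~\cref{lem:perturbation_removal:average_decay}, the total bad-cell length must be driven to $o(x_{n})$. The $o(x_{n})$ exceptional $\bm{b}$-gaps whose endpoint $m_{+}+1$ falls outside $\mathscr{G}_{b}$ contribute at most $o(x_{n})$ to the bad set, since their count itself is $o(x_{n})$. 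Subtracting these losses together with $\mathscr{G}_{b}^{c}$ leaves at least $Cx_{n}$ good indices, as required.
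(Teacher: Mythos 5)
Your proposal correctly identifies all the structural ingredients --- the finite/infinite dichotomy, the appeal to \cref{lem:small_tail} with $w_{n}=y_{n}^{d-1}$, the geometric decay of $\xi_{N}(|\bm{b}|)$ inside a $\bm{b}$-gap, the lower bound $\xi_{u}(|\bm{a}|)\ge|a(u)|\ge\house{a(u)}^{-(d-1)}\gg y_{n}^{-(d-1)}$ from \cref{lem:perturbation_removal:S_house_bound} and the Pisot/Salem property, and the interlacing axiom as the device that plants an $\mathcal{N}_{\bm{a}}$-element between $m$ and the good index. But the combinatorial accounting, which you flag as ``the hardest step,'' contains two genuine gaps. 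First, you try to anchor the analysis of each gap $(m,m_{+})$ at the single index $m_{+}+1$ and control $\xi_{m_{+}+1}(|\bm{b}|)$ via \cref{lem:small_tail}; that lemma is a density statement and gives no control at any prescribed index. Second, the concluding sentence ``the $o(x_{n})$ exceptional $\bm{b}$-gaps \ldots\ contribute at most $o(x_{n})$ to the bad set, since their count itself is $o(x_{n})$'' is invalid: a small \emph{count} of gaps says nothing about their total \emph{length} (one exceptional gap could cover almost all of $[1,\eta x_{n})$).

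The paper's proof circumvents both issues by not privileging any single index. It declares $m$ bad (class $\mathscr{J}_{\mathrm{bad}}^{(2)}$) only if \emph{every} integer $N\in[m+\tfrac{1}{2}(m_{+}-m),m_{+})$ has $y_{n}^{d-1}\xi_{N}(|\bm{b}|)\ge c^{1-d}$. Such a gap packs at least $\tfrac12(m_{+}-m)-1$ large-tail indices into $[1,\eta x_{n})$, so summing over $m\in\mathscr{J}_{\mathrm{bad}}^{(2)}$ gives $\tfrac12\sum(m_{+}-m)\le\#\mathscr{J}+\#\{N:y_{n}^{d-1}\xi_{N}(|\bm{b}|)\ge c^{1-d}\}=o(x_{n})$, by sparsity and \cref{lem:small_tail}. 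This bounds total bad \emph{length}, not merely count. For a good gap, there exists \emph{some} $k\in[m+\tfrac12(m_{+}-m),m_{+})$ with $\xi_{k}(|\bm{b}|)\le(cy_{n})^{-(d-1)}$; one application of interlacing with $\mu=\ell(m)\coloneqq\tfrac{1}{2\Delta}(m_{+}-m)$ (admissible since $m\notin\mathscr{J}_{\mathrm{bad}}^{(1)}$ ensures $\ell(m)>L$ and $m+\Delta\ell(m)<m_{+}$) produces $u\in\mathcal{N}_{\bm{a}}\cap[m+\ell(m),m+\Delta\ell(m))$ with $u<k$, and then $\xi_{N}(|\bm{a}|)\ge q^{N-u}\xi_{u}(|\bm{a}|)>q^{N-u}(cy_{n})^{-(d-1)}\ge q^{k-u}\xi_{N}(|\bm{b}|)>\xi_{N}(|\bm{b}|)$ for every $N\in(m,m+\ell(m)]$. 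No iteration of the interlacing along $\mu=L,L\Delta,L\Delta^{2},\ldots$ is needed, and no pointwise control of $\xi_{m_{+}}(|\bm{b}|)$ is used. Summing $\ell(m)$ over good $m$ then gives the constant $C=\eta/(2\Delta)-\epsilon$.
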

\begin{proof}
If $\#\mathcal{N}_{\bm{b}}<\infty$, 
then $\xi_{N}(|\bm{b}|)=0$ for every large integer $N$. 
On the other hand,
$\xi_{N}(|\bm{a}|)>0$ for every integer $N\ge1$
as $\#\mathcal{N}_{\bm{a}}=\infty$. 
Hence, the inequality \cref{lem:perturbation_removal:lower_bound}
is valid for any constant $C$ with $0<C<\eta$.
Thus, in what follows, we assume $\#\mathcal{N}_{\bm{b}}=\infty$.

Let $n\ge1$ be an integer sufficiently large and   
\[
\mathscr{J}
\coloneqq
\mathcal{N}_{\bm{b}}(\eta x_n)
=
\{
m\in[1,\eta x_{n})\cap\mathbb{Z}
\mid
b(m)\neq 0
\},\quad
\overline{\mathscr{J}}
\coloneqq
\mathscr{J}\cup\{\eta x_n\}.
\]
For an integer $m\in\mathscr{J}$,
let $m_{+}$ denote the successor of $m$ in $\overline{\mathscr{J}}$, namely, 
$m_{+}$ is the smallest element in $\overline{\mathscr{J}}$ greater than $m$.
Then we have 
\begin{equation}
\label{lem:perturbation_removal:J_sum}
\sum_{m\in\mathscr{J}}
(m_{+}-m)
=
\eta x_n-\min_{m\in\mathscr{J}}m
=
\eta x_{n}+O(1)
\quad\text{as $n\to\infty$}. 
\end{equation}
By condition~\cref{lem:perturbation_removal:S_house_bound},
there exists a constant $c>1$ such that 
\begin{equation}
\label{lem:perturbation_removal:S_house_bound_explicit}
S_{\bm{a}}(x_{n})
<
cy_{n}
\quad\text{for $n\ge 1$}.
\end{equation}
Let $\Delta$ and $L$ be constants $>1$ given in condition~\cref{lem:perturbation_removal:interlace}. 
Define 
\begin{align}
\mathscr{J}_{\mathrm{bad}}^{(1)}
&\coloneqq
\left\{
m\in\mathscr{J}
\mid
m_{+}-m
\le
2\Delta 
L
\right\},\\
\mathscr{J}_{\mathrm{bad}}^{(2)}
&\coloneqq
\{
m\in\mathscr{J}
\mid y_n^{d-1}\xi_{N}(|\bm{b}|)\ge  c^{1-d}
\ \text{for all integers $N\in\left[m+\tfrac{1}{2}(m_{+}-m),m_{+}\right)$}
\},\\
\mathscr{J}_{\mathrm{good}}
&\coloneqq
\mathscr{J}
\setminus
(
\mathscr{J}_{\mathrm{bad}}^{(1)}
\cup
\mathscr{J}_{\mathrm{bad}}^{(2)}
).
\end{align}
We first show 
\begin{equation}
\label{lem:perturbation_removal:J_good}
\sum_{m\in\mathscr{J}_{\mathrm{good}}}(m_{+}-m)
=
\eta x_{n}+o(x_{n})
\quad\text{as $n\to\infty$}.
\end{equation}
To see this, we estimate $\sum_{m\in\mathscr{J}_{\mathrm{bad}}^{(j)}}(m_{+}-m)$ for $j=1,2$. 
By condition \cref{lem:perturbation_removal:sparsity}, we have 
\begin{equation}\label{lem:perturbation_removal:J_density_zero}
\#\mathscr{J}
\le
\#\mathcal{N}_{\bm{b}}(x_n)=o(x_n)\quad\text{as $n\to\infty$}. 
\end{equation}
Hence, by \cref{lem:perturbation_removal:J_density_zero}
and the definition of $\mathscr{J}_{\mathrm{bad}}^{(1)}$, we have 
\begin{equation}\label{lem:perturbation_removal:J_bad_1}
\sum_{m\in\mathscr{J}_{\mathrm{bad}}^{(1)}}
(m_{+}-m)
\le
2\Delta L\cdot \#\mathscr{J}_{\mathrm{bad}}^{(1)}
\le
2\Delta L\cdot \#\mathscr{J}
=
o(x_{n})
\quad\text{as $n\to\infty$}.
\end{equation}
Moreover, for each integer $m\in\mathscr{J}_{\mathrm{bad}}^{(2)}$, we have 
\[
\sum_{\substack{
m\le N<m_{+}\\
y_n^{d-1}\xi_{N}(|\bm{b}|)\ge c^{1-d}
}}
1
\ge
\sum_{m+\frac{1}{2}(m_{+}-m)\le N<m_{+}}
1
\ge
\frac{1}{2}(m_{+}-m)-1
\]
so that  
\begin{align}
\frac{1}{2}\sum_{m\in\mathscr{J}_{\mathrm{bad}}^{(2)}}(m_{+}-m)
&\le
\sum_{m\in\mathscr{J}_{\mathrm{bad}}^{(2)}}
1
+
\sum_{m\in\mathscr{J}_{\mathrm{bad}}^{(2)}}
\sum_{\substack{
m\le N<m_{+}\\
y_n^{d-1}\xi_{N}(|\bm{b}|)\ge  c^{1-d}
}}
1\\
&\le 
\#\mathscr{J}
+
\#\{
N\in[1,\eta x_{n})\cap\mathbb{Z}
\mid
y_n^{d-1}\xi_{N}(|\bm{b}|)\ge c^{1-d}
\}.\label{lem:perturbation_removal:J_bad_2_prelim}
\end{align}
By conditions 
\cref{lem:perturbation_removal:x_infty}, 
\cref{lem:perturbation_removal:sparsity},
\cref{lem:perturbation_removal:average_decay}, 
the sequences 
$(x_n)_{n=1}^{\infty}$,
$(w_{n})_{n=1}^{\infty}\coloneqq(y_{n}^{d-1})_{n=1}^{\infty}$,
$(z_n)_{n=1}^{\infty}$ and the constant $\eta$ 
satisfy all conditions in \cref{lem:small_tail} for $\bm{c}\coloneqq \bm{b}$. 
Hence, applying \cref{lem:small_tail} with $\delta\coloneqq c^{1-d}$, we obtain  
\begin{equation}\label{lem:perturbation_removal:J_bad_2_second}
\#\{
N\in[1,\eta x_{n})\cap\mathbb{Z}
\mid
y_n^{d-1}\xi_{N}(|\bm{b}|)\ge c^{1-d}
\}
=
o(x_n)
\quad\text{as $n\to\infty$}. 
\end{equation}
Thus, it follows from \cref{lem:perturbation_removal:J_bad_2_prelim}
together with \cref{lem:perturbation_removal:J_density_zero}
and \cref{lem:perturbation_removal:J_bad_2_second} that 
\begin{equation}
\label{lem:perturbation_removal:J_bad_2}
\sum_{m\in\mathscr{J}_{\mathrm{bad}}^{(2)}}(m_{+}-m)
=
o(x_{n})
\quad\text{as $n\to\infty$}.
\end{equation}
Therefore, by \cref{lem:perturbation_removal:J_sum},
\cref{lem:perturbation_removal:J_bad_1}
and \cref{lem:perturbation_removal:J_bad_2}, we obtain 
\[
\sum_{m\in\mathscr{J}_{\mathrm{good}}}
(m_{+}-m)
=
\sum_{m\in\mathscr{J}}
(m_{+}-m)
-
\sum_{m\in\mathscr{J}_{\mathrm{bad}}^{(1)}\cup \mathscr{J}_{\mathrm{bad}}^{(2)}}
(m_{+}-m)
=
\eta x_n+o(x_n)
\quad\text{as $n\to\infty$}. 
\]

Next, for any integer $m\in\mathscr{J}_{\mathrm{good}}$, we prove 
\begin{equation}
\label{lem:perturbation_removal:good}
\xi_{N}(|\bm{a}|)>\xi_{N}(|\bm{b}|)
\quad\text{for all integers $N\in(m,m+\ell(m)]$},
\end{equation}
where 
\begin{equation}
\label{lem:perturbation_removal:ell_def}
\ell(m)
\coloneqq
\frac{1}{2\Delta}(m_{+}-m).
\end{equation}
Fix $m\in\mathscr{J}_{\mathrm{good}}$. 
Since $m\not\in\mathscr{J}_{\mathrm{bad}}^{(1)}$, we have $\ell(m)>L$,
and moreover, by \cref{lem:perturbation_removal:ell_def},
\[
m+\Delta\cdot\ell(m)
=
m+\frac{1}{2}(m_{+}-m)
<
m_{+}.
\]
Hence, by condition~\cref{lem:perturbation_removal:interlace},
there exists a positive integer $u=u(m)\in\mathcal{N}_{\bm{a}}$ such that 
\[
m+\ell(m)\le u<m+\Delta\cdot\ell(m).
\]
Noting that $u<m_{+}\le x_{n}$
and $a(u)$ is a non-zero algebraic integer of $\mathbb{Q}(q)$,
by \cref{lem:perturbation_removal:S_house_bound_explicit}, we have 
\begin{equation}\label{lem:perturbation_removal:alpha_m}
\xi_{u}(|\bm{a}|)
=
\sum_{h\ge0}
\frac{|a(u+h)|}{q^{h}}
\ge
|a(u)|
\ge 
\house{a(u)}^{-(d-1)}
\ge
S_{\bm{a}}(x_{n})^{-(d-1)}
>
(cy_{n})^{-(d-1)}.
\end{equation}
Also, since $m\not\in\mathscr{J}_{\mathrm{bad}}^{(2)}$, 
there exists an integer $k=k(m)\in[m+\frac{1}{2}(m_{+}-m),m_{+})$ satisfying 
\begin{equation}\label{lem:perturbation_removal:small_tail}
\xi_{k}(|\bm{b}|)
\le 
(cy_{n})^{-(d-1)}.
\end{equation}
Moreover,   
\begin{equation}
\label{lem:perturbation_removal:vanishing_b}
b(n)=0\ \text{for all integers $n\in(m,m_{+})$}
\end{equation}
and
\begin{equation}
\label{lem:perturbation_removal:indices_order}
m<m+\ell(m)\le u<m+\Delta \cdot \ell(m)=m+\frac{1}{2}(m_{+}-m)\le k<m_{+}.
\end{equation}
Hence, by \cref{lem:perturbation_removal:small_tail}, \cref{lem:perturbation_removal:vanishing_b} and 
\cref{lem:perturbation_removal:indices_order},   
for any integer $N\in(m,m+\ell(m)]$, we obtain 
\begin{equation}\label{lem:perturbation_removal:beta_m}  
\xi_{N}(|\bm{b}|)
=
q^{N}\sum_{h\geq N}\frac{|b(h)|}{q^h}
=
q^{N}\sum_{h\geq k}\frac{|b(h)|}{q^h}
=
q^{N-k}
\xi_{k}(|\bm{b}|)
\le
q^{N-k}(cy_{n})^{-(d-1)}.
\end{equation}
Thus, by
\cref{lem:perturbation_removal:alpha_m} and
\cref{lem:perturbation_removal:beta_m}, 
for any integer $m\in\mathscr{J}_{\mathrm{good}}$ and
any integer $N\in(m,m+\ell(m)]$, we have 
\[
\xi_{N}(|\bm{a}|)
\ge
q^{N}
\sum_{h\ge u}\frac{|a(h)|}{q^{h}}
=
q^{N-u}
\xi_{u}(|\bm{a}|)
>
q^{N-u}(cy_{n})^{-(d-1)}
\ge 
q^{k-u}
\xi_{N}(|\bm{b}|)
>
\xi_{N}(|\bm{b}|),
\]
and \cref{lem:perturbation_removal:good} is proved. 

Finally, by
\cref{lem:perturbation_removal:J_good},
\cref{lem:perturbation_removal:good} and
\cref{lem:perturbation_removal:ell_def},
we obtain
\begin{align}
\#\{
N\in[1,\eta x_{n})\cap\mathbb{Z}
\mid
\xi_{N}(|\bm{a}|)>\xi_{N}(|\bm{b}|)
\}
&\ge 
\sum_{m\in\mathscr{J}}
\sum_{\substack{
m\le N<m_{+}\\
\xi_{N}(|\bm{a}|)>\xi_{N}(|\bm{b}|)
}}
1\\
&\ge
\sum_{m\in\mathscr{J}_{\mathrm{good}}}
\sum_{\substack{
m<N\le m+\ell(m)\\
\xi_{N}(|\bm{a}|)>\xi_{N}(|\bm{b}|)
}}
1\\
&=
\sum_{m\in\mathscr{J}_{\mathrm{good}}}
\ell(m)
=
\frac{\eta}{2\Delta} x_{n}
+
o(x_{n})
\quad\text{as $n\to\infty$},
\end{align}
which shows that \cref{lem:perturbation_removal:lower_bound} holds
for any constant $C$ with $0<C<\eta/(2\Delta)$.
The proof is completed. 
\end{proof}

Now we prove \cref{thm:criterion_irrationality_original}. 
\begin{proof}[Proof of \cref{thm:criterion_irrationality_original}]
Let $\bm{a}$, $\bm{b}$,
$(x_n)_{n=1}^{\infty}$, $(y_n)_{n=1}^{\infty}$, $(z_n)_{n=1}^{\infty}$, 
$\eta$ be as in \cref{thm:criterion_irrationality_original} and  
$(w_n)_{n=1}^{\infty}\coloneqq(y_n^{d-1})_{n=1}^{\infty}$. 
Then, all conditions of \cref{lem:small_tail} for $\bm{c}\coloneqq \bm{a}+\bm{b}$
and all conditions of \cref{lem:perturbation_removal} for $\bm{a},\bm{b}$ are satisfied. 
Hence, for an arbitrary constant $\epsilon>0$, by \cref{lem:small_tail} we have  
\begin{equation}\label{thm:criterion_irrationality_original:small_tail}
\#\mathcal{F}_{1}
\coloneqq 
\#\{
N\in[1,\eta x_{n})\cap\mathbb{Z}
\mid
y_n^{d-1}\xi_{N}(|\bm{a}+\bm{b}|)<\epsilon
\}
=
\eta x_n+o(x_n)
\quad\text{as $n\to\infty$}.
\end{equation}
Moreover, we find by \cref{lem:perturbation_removal} that 
there exists a constant $C>0$ such that 
\begin{equation}\label{thm:criterion_irrationality_original:perturbation_removal}
\#\mathcal{F}_{2}
\coloneqq 
\#\{
N\in[1,\eta x_{n})\cap\mathbb{Z}
\mid
\xi_{N}(\bm{a})>\xi_{N}(|\bm{b}|)
\}
>
Cx_n
\end{equation}
holds for every large integer $n$. Thus, for the set 
\[
\mathcal{F}
\coloneqq
\mathcal{F}_{1}\cap\mathcal{F}_{2}
=
\{
N\in[1,\eta x_{n})\cap\mathbb{Z}
\mid
\xi_{N}(\bm{a})>\xi_{N}(|\bm{b}|),\ 
y_n^{d-1}\xi_{N}(|\bm{a}+\bm{b}|)<\epsilon
\},
\] 
by using
\cref{thm:criterion_irrationality_original:small_tail} and
\cref{thm:criterion_irrationality_original:perturbation_removal}
together with $\#(\mathcal{F}_1\cup \mathcal{F}_2)\le \eta x_n$, 
we obtain  
\[
\#\mathcal{F}
=
\#\mathcal{F}_{1}
+
\#\mathcal{F}_{2}
-
\#(\mathcal{F}_{1}\cup\mathcal{F}_{2})
>
Cx_n+o(x_n)
\quad\text{as $n\to\infty$},
\]
and hence $\#\mathcal{F}\to\infty$ by condition~\cref{thm:criterion_irrationality_original:x_infty}. 
Let $n$ be a large integer and $N=\max\mathcal{F}$. 
Then, we have $N\to \infty$ as $n\to\infty$
since $\#\mathcal{F}\to\infty$ as $n\to\infty$, and the inequalities 
\[
\xi_{N}(\bm{a})>\xi_{N}(|\bm{b}|)
\quad\text{and}\quad 
y_n^{d-1}\xi_{N}(|\bm{a}+\bm{b}|)<\epsilon
\]  
are satisfied. 
Hence, by condition~\cref{thm:criterion_irrationality_original:S_house_bound}, 
we obtain 
\begin{align}
\xi_{N}(|\bm{a}+\bm{b}|)\cdot S_{\bm{a}+\bm{b}}(N)^{d-1}
&\le
\epsilon y_{n}^{-(d-1)}
\cdot
\left(
S_{\bm{a}}(x_n)+S_{\bm{b}}(x_{n})
\right)^{d-1}\\
&=
O(\epsilon y_{n}^{-(d-1)}\cdot y_{n}^{d-1})\quad\text{as $n\to\infty$}\\
&=
O(\epsilon)\quad\text{as $n\to\infty$},
\end{align}
and therefore, \cref{thm:criterion_irrationality_original} follows
by applying \cref{lem:fundamental_criterion}.
\end{proof}

\section{Proof of \cref{thm:criterion_irrationality_prepared}}
\label{subsec:deduction_prepared}
We use the following lemma for the proof of \cref{thm:criterion_irrationality_prepared}. 

\begin{lemma}
\label{lem:use_summatory_function} 
Let $\bm{c}$
be a sequence of complex numbers. 
Suppose that there exist sequences
$(x_n)_{n=1}^{\infty}$, $(w_{n})_{n=1}^{\infty}$, $(z_{n})_{n=1}^{\infty}$
of real numbers $\ge1$ and a constant $\eta\in(0,1]$ 
satisfying the following conditions as $n\to\infty$\textup{:}
\begin{enumerate}
\item\label{lem:use_summatory_function:moderate_growth}
$\sum_{m<x_n}|c(m)|
=
o(q^{z_{n}}x_{n}/w_{n})$.
\item\label{lem:use_summatory_function:shifted_decay}
$R_{\bm{c}}(\eta x_{n},(1-\eta)x_{n})
=
o(x_{n}/w_{n})$.
\end{enumerate}
Then we have
\begin{equation}
R_{\bm{c}}(\eta x_{n},z_{n})
=
o(x_{n}/w_{n})
\quad\text{as $n\to\infty$}.
\end{equation}
\end{lemma}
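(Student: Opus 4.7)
The plan is to estimate $R_{\bm{c}}(\eta x_{n},z_{n})$ by splitting the inner sum over $j$ at the threshold $(1-\eta)x_{n}$. Write
\begin{equation}
R_{\bm{c}}(\eta x_{n},z_{n})
=
\sum_{n'<\eta x_{n}}
\sum_{z_{n}\le j<(1-\eta)x_{n}}
\frac{|c(n'+j)|}{q^{j}}
+
R_{\bm{c}}(\eta x_{n},(1-\eta)x_{n}),
\end{equation}
where it is harmless to assume $z_{n}<(1-\eta)x_{n}$ since otherwise the first sum is empty. The second term is already $o(x_{n}/w_{n})$ by hypothesis~\cref{lem:use_summatory_function:shifted_decay}, so it suffices to handle the first term, call it $A_{n}$.

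For $A_{n}$ I would swap the order of summation. Observe that in the range $n'<\eta x_{n}$ and $j<(1-\eta)x_{n}$ one has $n'+j<x_{n}$, so for each fixed $j$ in this range the inner sum
\[
\sum_{n'<\eta x_{n}}|c(n'+j)|
=
\sum_{j\le m<j+\eta x_{n}}|c(m)|
\le
\sum_{m<x_{n}}|c(m)|.
\]
Factoring out $\sum_{m<x_{n}}|c(m)|$ and extending the $j$-sum to all $j\ge z_{n}$ yields the geometric bound
\[
A_{n}
\le
\biggl(\sum_{m<x_{n}}|c(m)|\biggr)
\sum_{j\ge z_{n}}\frac{1}{q^{j}}
=
\frac{1}{1-q^{-1}}\cdot\frac{1}{q^{z_{n}}}\sum_{m<x_{n}}|c(m)|.
\]
Hypothesis~\cref{lem:use_summatory_function:moderate_growth} makes the right-hand side equal to $o(x_{n}/w_{n})$, which completes the proof.

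There is no real obstacle: the argument is essentially bookkeeping, the only subtlety being the choice of the split point $(1-\eta)x_{n}$, which is dictated by condition~\cref{lem:use_summatory_function:shifted_decay} and by the requirement that $n'+j$ stay below $x_{n}$ so that hypothesis~\cref{lem:use_summatory_function:moderate_growth} can be applied after swapping summations.
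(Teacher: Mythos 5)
Your proof is correct and follows essentially the same approach as the paper: both split the inner $j$-sum at a threshold near $(1-\eta)x_n$, swap summation orders to bound the ``medium-$j$'' piece by $q^{-z_n}\sum_{m<x_n}|c(m)|$ via hypothesis~(i), and absorb the ``large-$j$'' tail into $R_{\bm{c}}(\eta x_n,(1-\eta)x_n)$ via hypothesis~(ii). The only cosmetic difference is that the paper cuts at the $N$-dependent point $j=x_n-N$ while you cut at the fixed point $j=(1-\eta)x_n$; your variant is equally valid and arguably a touch cleaner since the decomposition is exact.
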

\begin{proof}
Decompose $R_{\bm{c}}(\eta x_{n},z_{n})$ as
\begin{equation}
\label{lem:use_summatory_function:decomp}
R_{\bm{c}}(\eta x_{n},z_{n})
=
\sum_{N<\eta x_{n}}
\sum_{j\ge z_{n}}
\frac{|c(N+j)|}{q^{j}}
=
R_{1}+R_{2},
\end{equation}
where 
\[
R_{1}
\coloneqq
\sum_{N<\eta x_{n}}
\sum_{z_{n}\le j<x_{n}-N}
\frac{|c(N+j)|}{q^{j}}
\quad\text{and}\quad 
R_{2}
\coloneqq 
\sum_{N<\eta x_{n}}
\sum_{j\ge x_{n}-N}
\frac{|c(N+j)|}{q^{j}}.
\]
Then, by conditions~\cref{lem:use_summatory_function:moderate_growth} and
\cref{lem:use_summatory_function:shifted_decay},
we have 
\[
R_{1}
\le
\sum_{j\ge z_{n}}
q^{-j}
\sum_{N<x_{n}-j}
|c(N+j)|
\le
\sum_{j\ge z_{n}}
q^{-j}
\sum_{n<x_n}|c(n)|
=
o(x_n/w_n)
\quad\text{as $n\to\infty$}
\]
and
\[
R_{2}
\le
\sum_{N<\eta x_{n}}
\sum_{j\ge(1-\eta)x_{n}}
\frac{|c(N+j)|}{q^{j}}
=
R_{\bm{c}}(\eta x_{n},(1-\eta)x_{n})=o(x_n/w_n)\quad\text{as $n\to\infty$},
\]
respectively. Thus \cref{lem:use_summatory_function} is proved. 
\end{proof}

\begin{proof}[Proof of \cref{thm:criterion_irrationality_prepared}.]
Let $\bm{a}$, $\bm{b}$,
$(x_n)_{n=1}^{\infty}$, $(y_n)_{n=1}^{\infty}$, $(z_n)_{n=1}^{\infty}$
and $\rho$ be as in \cref{thm:criterion_irrationality_prepared}. 
Then, all conditions in \cref{thm:criterion_irrationality_original}
other than \cref{thm:criterion_irrationality_original:average_decay} are clearly satisfied 
from the assumptions of \cref{thm:criterion_irrationality_prepared}. 
In what follows, we check condition \cref{thm:criterion_irrationality_original:average_decay}
in \cref{thm:criterion_irrationality_original} by using \cref{lem:use_summatory_function}. 
We first note 
\begin{equation}
\label{thm:criterion_irrationality_prepared:def_lambda_rho_lower_bound}
\lambda
\coloneqq
\varlimsup_{n\to\infty}{y_n}^{(d-1)/x_n}
\ge
1
\quad\text{and}\quad 
\rho
\ge
\varlimsup_{n\to\infty}\house{a(n)}^{1/n}
\ge
1,
\end{equation}
which follow by $y_n\ge1$ for $n\ge1$
and condition~\cref{thm:criterion_irrationality_prepared:pointwise};
indeed, $\house{a(n)}\geq1$ for infinitely many $n$ 
since $\bm{a}$ is a sequence of algebraic integers 
with $\#\mathcal{N}_{\bm{a}}=\infty$.  
Fix a constant $\mu>1$ with $\lambda<\mu<q/\rho$,
which is possible since $1\le \lambda<q/\rho$
by \cref{thm:criterion_irrationality_prepared:def_lambda_rho_lower_bound}
and condition \cref{thm:criterion_irrationality_prepared:yn_bound}.  
Then, there exists a constant $\delta\in(0,1)$ small enough satisfying   
\begin{equation}\label{thm:criterion_irrationality_prepared:delta}
\lambda<\mu^{1-\delta}<\frac{q^{1-\delta}}{\rho}.
\end{equation}
Hence, by
\cref{thm:criterion_irrationality_prepared:def_lambda_rho_lower_bound} and
\cref{thm:criterion_irrationality_prepared:delta},
we have $y_n^{(d-1)/x_n}<\mu^{1-\delta}$ for every large $n$, 
and so there exists a constant $c_1>0$ such that 
\begin{equation}\label{thm:criterion_irrationality_prepared:yn_mu_ineq}
y_{n}^{d-1}
\le 
c_1\mu^{(1-\delta)x_{n}}
\quad\text{for $n\geq1$},
\end{equation}
where $c_1$ (and $c_2,c_3$ below) is independent of $n$. 
Moreover, by
\cref{thm:criterion_irrationality_prepared:def_lambda_rho_lower_bound} and
\cref{thm:criterion_irrationality_prepared:delta},  
\begin{equation}\label{thm:criterion_irrationality_prepared:nu}
1
\le
\rho
<
\nu
\coloneqq
\rho^{1/2}
\left(
\frac{q}{\mu}
\right)^{(1-\delta)/2}
<
\left(
\frac{q}{\mu}
\right)^{1-\delta}
<
q,
\end{equation}
where the last inequality follows from $\delta<1<\mu,q$. 
Thus, by
\cref{thm:criterion_irrationality_prepared:def_lambda_rho_lower_bound} and
\cref{thm:criterion_irrationality_prepared:nu},
there exists a constant $c_2>0$ such that $a(n)\le c_{2}\nu^{n}$ for $n\ge 1$. 
Therefore,
by \cref{thm:criterion_irrationality_prepared:delta},
\cref{thm:criterion_irrationality_prepared:yn_mu_ineq} and
\cref{thm:criterion_irrationality_prepared:nu},
we obtain 
\begin{align}
R_{\bm{a}}(\delta x_n,(1-\delta)x_{n})
=
\sum_{N<\delta x_n}
\sum_{j\ge (1-\delta)x_n}
\frac{a(N+j)}{q^j}
&\le 
c_2
\sum_{N<\delta x_n}
\sum_{j\ge(1-\delta)x_n}
\frac{\nu^{N+j}}{q^j}\\
&\le 
c_2\delta x_n
\sum_{j\ge(1-\delta)x_n}
\frac{\nu^{\delta x_n+j}}{q^j}\\
&\le 
c_3
\delta x_n 
\left(
\frac{\nu}{q^{1-\delta}}
\right)^{x_n}\\
&=
c_3\delta x_n\cdot o(\mu^{-(1-\delta)x_n})
\quad\text{as $n\to\infty$}\\
&=
o(x_n /y_{n}^{d-1})
\quad\text{as $n\to\infty$},
\end{align}
which shows that condition~\cref{lem:use_summatory_function:shifted_decay}
in \cref{lem:use_summatory_function} is satisfied for $\bm{c}\coloneqq\bm{a}$,
$(w_n)_{n=1}^{\infty}\coloneqq (y_n^{d-1})_{n=1}^{\infty}$ and $\eta\coloneqq\delta$. 
Moreover, condition~\cref{lem:use_summatory_function:moderate_growth}
in \cref{lem:use_summatory_function} is satisfied
by condition~\cref{thm:criterion_irrationality_prepared:moderate_growth}.
Hence, by \cref{lem:use_summatory_function}, we have 
\[
R_{\bm{a}}(\delta x_n,z_n)=o(x_n/y_n^{d-1})\quad\text{as $n\to\infty$}.
\]
In the same way,
we can derive $R_{\bm{b}}(\delta x_n,z_n)=o(x_n/y_n^{d-1})$ as $n\to\infty$, 
and therefore, condition~\cref{thm:criterion_irrationality_original:average_decay}
in \cref{thm:criterion_irrationality_original} is satisfied for $\eta\coloneqq\delta$. 
Thus, applying \cref{thm:criterion_irrationality_original},
we obtain \cref{thm:criterion_irrationality_prepared}.
\end{proof}

\section{Proofs of \cref{cor:MinkowskiSum,cor:degree_ell_irrationality}}\label{sec:addcor}
\begin{proof}[Proof of \cref{cor:MinkowskiSum}] 
We apply \cref{thm:criterion_irrationality_prepared}. 
Let $(x_n)_{n=1}^{\infty}$ and $(y_n)_{n=1}^{\infty}$
be sequences of real numbers $\ge1$ defined, respectively, by 
\begin{equation}\label{cor:MinkowskiSum:def_xn_yn}
x_n\coloneqq n\quad\text{for $n\ge1$}
\quad\text{and}\quad 
y_n\coloneqq 
\left\{
\begin{array}{cl}
1&\text{for $n=1$},\\
\max(S_{\bm{u}}(n),S_{\bm{v}}(n))&\text{for $n\ge2$}.
\end{array}
\right.
\end{equation}
Since $u(n)>0$ for $n\ge 1$, we have $\house{u(n)}\geq 1$ for $n\ge 1$ and so 
\begin{equation}\label{cor:MinkowskiSum:yn_lower_bound}
y_n
\ge
S_{\bm{u}}(n)
=
\sum_{m<n}\house{u(m)}
\ge
n-1
\quad\text{for $n\ge 2$}. 
\end{equation}
Define 
\begin{equation}\label{cor:MinkowskiSum:def_wn}
w_n
\coloneqq
\#\bigl((\mathcal{A}+\mathcal{B})\cap [1,n)\bigr)\cdot \frac{\log y_n}{n}
\quad\text{for $n\ge 1$}. 
\end{equation}
Then, by \cref{cor:MinkowskiSum:yn_lower_bound} and $\#\mathcal{A}=\infty$,
there exists a positive integer $n_0$ such that 
$w_n>0$ for every integer $n$ with $n> n_0$. 
Let $(z_n)_{n=1}^{\infty}$ be a sequence of real numbers $\ge1$
defined by
\[
z_{n}
\coloneqq
1\quad\text{for $1\le n\le n_0$}
\quad\text{and}\quad
z_{n}
\coloneqq
\max(1,w_n^{-1/2}\log y_n)
\quad\text{for $n>n_{0}$}.
\]
By \cref{cor:MinkowskiSum:sparsity} and \cref{cor:MinkowskiSum:yn_lower_bound},
we have
\begin{equation}
\label{cor:MinkowskiSum:wn_yn_limit}
w_n\to0
\quad\text{and}\quad
y_n\to\infty
\quad\text{as $n\to\infty$}.
\end{equation}
Hence, 
\begin{equation}\label{cor:MinkowskiSum:zn_reduction}
z_{n}
=
w_{n}^{-1/2} \log y_n
\end{equation}
hold for all large $n$ and so
\begin{align}\label{cor:MinkowskiSum:zn_log_yn_infty}
\lim_{n\to\infty}
\frac{z_n}{\log y_n}
=
\lim_{n\to\infty}
w_n^{-1/2}
=
\infty.
\end{align}

Now we define sequences
$\bm{a}=(a(n))_{n=1}^{\infty}$ and $\bm{b}=(b(n))_{n=1}^{\infty}$ by 
\[
a(n)
\coloneqq 
\left\{
\begin{array}{cl}
u(n)&\text{if $n\in\mathcal{A}+\mathcal{B}$},\\
0&\text{otherwise},
\end{array}
\right. 
\quad\text{and}\quad
b(n)\coloneqq 
\left\{
\begin{array}{cl}
v(n)&\text{if $n\in\mathcal{B}$},\\
0&\text{otherwise},
\end{array}
\right.  
\]
respectively.
Then, $a(n)\ge 0$ for $n\ge 1$ and
$\#\mathcal{N}_{\bm{a}}=\infty$ since $u(n)>0$ for $n\ge1$ and $\# \mathcal{A}=\infty$. 
In what follows, we check conditions
\cref{thm:criterion_irrationality_original:x_infty},
\cref{thm:criterion_irrationality_original:S_house_bound},
\cref{thm:criterion_irrationality_original:sparsity},
\cref{thm:criterion_irrationality_original:interlace}
in \cref{thm:criterion_irrationality_original} and
conditions
\cref{thm:criterion_irrationality_prepared:pointwise},
\cref{thm:criterion_irrationality_prepared:yn_bound},
\cref{thm:criterion_irrationality_prepared:moderate_growth}
in \cref{thm:criterion_irrationality_prepared}. 
Conditions
\cref{thm:criterion_irrationality_original:x_infty} and
\cref{thm:criterion_irrationality_original:S_house_bound}
in \cref{thm:criterion_irrationality_original} are clear by \cref{cor:MinkowskiSum:def_xn_yn}. 
Moreover, condition~\cref{thm:criterion_irrationality_original:sparsity}
in \cref{thm:criterion_irrationality_original} is satisfied since,
by
\cref{cor:MinkowskiSum:def_wn},
\cref{cor:MinkowskiSum:wn_yn_limit} and
\cref{cor:MinkowskiSum:zn_reduction},
we have  
\begin{equation}\label{cor:MinkowskiSum:a_density_zero}
\#\mathcal{N}_{\bm{a}}(x_n)
=
\#\bigl((\mathcal{A}+\mathcal{B})\cap[1,n)\bigr)
=
w_n\frac{n}{\log y_n}
=
w_n^{1/2}\frac{n}{z_n}
=
o(n/z_n)
\quad\text{as $n\to\infty$}
\end{equation}
and, by writing $\ell_{\mathcal{A}}\coloneqq\min_{\ell\in\mathcal{A}}\ell$, we have
\begin{align}
\#\mathcal{N}_{\bm{b}}(x_n)
=
\sum_{\substack{
m<n\\
m\in\mathcal{B}
}}
1
\le
\sum_{\substack{
m<n\\
\ell_{\mathcal{A}}+m\in\mathcal{A}+\mathcal{B}
}}
1
=
\sum_{\substack{
m<n+\ell_{\mathcal{A}}\\
m\in\mathcal{A}+\mathcal{B}
}}
1
&\le
\sum_{\substack{
m<n\\
m\in\mathcal{A}+\mathcal{B}
}}
1
+
\sum_{n\le n<n+\ell_{\mathcal{A}}}
1\\
&\le
\#\bigl((\mathcal{A}+\mathcal{B})\cap[1,n)\bigr)
+
\ell_{\mathcal{A}}\\
&\le
2\#\mathcal{N}_{\bm{a}}(x_n)
=
o(n/z_n)
\quad\text{as $n\to\infty$}.
\end{align}
Next we check condition~\cref{thm:criterion_irrationality_original:interlace}
in \cref{thm:criterion_irrationality_original}. 
Assume $\#\mathcal{N}_{\bm{b}}=\infty$
and let $\Delta,L>1$ be constants as in \cref{cor:MinkowskiSum:interlace}.
Since $\mathcal{N}_{\bm{b}}\subset\mathcal{B}$,
we then have $\#\mathcal{B}=\infty$, so we can use \cref{cor:MinkowskiSum:interlace}.
Thus, for any consecutive indices $m<m_{+}$ in $\mathcal{N}_{\bm{b}}$ and 
any real number $\mu\ge L$ with $m+\Delta\mu<m_{+}$, 
there exists an integer $\ell\in\mathcal{A}\cap[\mu,\Delta\mu)$ by \cref{cor:MinkowskiSum:interlace} 
and so we obtain 
\[
\ell+m
\in 
(\mathcal{A}+\mathcal{B})\cap[m+\mu,m+\Delta\mu)
=
\mathcal{N}_{\bm{a}}\cap[m+\mu,m+\Delta\mu),
\]
which shows that condition~\cref{thm:criterion_irrationality_original:interlace}
in \cref{thm:criterion_irrationality_original} is fulfilled.  
Conditions
\cref{thm:criterion_irrationality_prepared:pointwise} and
\cref{thm:criterion_irrationality_prepared:yn_bound}
in \cref{thm:criterion_irrationality_prepared} are satisfied
by $\house{a(n)},\house{b(n)}\le y_{n}$
and $\lim_{n\to\infty} y_n^{1/n}=1$,
the latter of which follows immediately from \cref{cor:MinkowskiSum:wn_yn_limit}
and \cref{cor:MinkowskiSum:a_density_zero} since 
\[
\log y_{n}^{1/n}
=
\frac{1}{n}\log y_{n}
=
\frac{w_n}{\#\mathcal{N}_{\bm{a}}(x_n)}
\to
0
\quad\text{as $n\to\infty$}.
\]
We also find by \cref{cor:MinkowskiSum:zn_log_yn_infty} that 
\[
\sum_{m<n}a(m),
\sum_{m<n}|b(m)|\leq y_n
=
o(q^{z_{n}}n/y_{n}^{d-1})
\quad\text{as $n\to\infty$}
\]
holds for any fixed positive integer $d$, and hence
condition \cref{thm:criterion_irrationality_prepared:moderate_growth}
in \cref{thm:criterion_irrationality_prepared} is fulfilled. 
Thus, \cref{cor:MinkowskiSum} follows by applying \cref{thm:criterion_irrationality_prepared}.  
\end{proof}

To prove \cref{cor:degree_ell_irrationality},
we first prove the following Liouville-type transcendence criterion.
Note that we do not assume that $q$ is a Pisot or Salem number,
nor the condition \cref{cor:degree_ell_irrationality:cond}.
\begin{lemma}
\label{lem:Liouville}
Let $q>1$ be a real algebraic integer, $K$ be a finite extension of $\mathbb{Q}(q)$
and $(a(n))_{n=1}^{\infty}$ be a sequence of algebraic integers of $K$
such that $a(n)\geq0$ for $n\ge1$ and $\#\mathcal{N}_{\bm{a}}=\infty$. 
Write the elements of $\mathcal{N}_{\bm{a}}$
as $n_1,n_2,\dots$ so that $(n_{k})_{k=1}^{\infty}$ is strictly increasing. 
Suppose that
\begin{equation}
\label{lem:Liouville:coefficient_bound}
\varlimsup_{k\to\infty}\house{a(n_{k})}^{\frac{1}{n_{k}}}<q
\end{equation}
and
\begin{equation}
\label{lem:Liouville:lacunary}
\varlimsup_{k\to\infty}
\frac{n_{k+1}}{n_{k}}
=\infty.
\end{equation}
Then, the number $\sum_{n=1}^{\infty}a(n)/{q^{n}}$ is transcendental.
\end{lemma}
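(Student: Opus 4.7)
The plan is to carry out a classical Liouville-type transcendence argument using the partial sums of $\xi \coloneqq \sum_{n=1}^{\infty} a(n)/q^n$ as rational approximations. Unlike in \cref{lem:fundamental_criterion}, we no longer assume that the conjugates of $q$ have absolute value $\le 1$, so the conjugate contributions grow exponentially in $n_k$; the super-geometric sparsity hypothesis \cref{lem:Liouville:lacunary} is exactly what is needed to absorb this loss.

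First I would suppose, for contradiction, that $\xi$ is algebraic. Set $K' \coloneqq K(\xi)$, a number field of some finite degree $D$ over $\mathbb{Q}$, and pick $u \in \mathbb{Z}_{\ge 1}$ with $u\xi \in \mathcal{O}_{K'}$. For each $k \ge 1$, define
\[
\alpha_k \coloneqq u\,q^{n_k}\!\left(\xi - \sum_{n \le n_k}\frac{a(n)}{q^{n}}\right) = u\sum_{j \ge k+1}\frac{a(n_j)}{q^{n_j-n_k}}.
\]
Since $q$ and the $a(n)$'s are algebraic integers, $\alpha_k \in \mathcal{O}_{K'}$; and since $a(n) \ge 0$ with $a(n_{k+1}) > 0$, we have $\alpha_k > 0$. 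In particular, $|N_{K'/\mathbb{Q}}(\alpha_k)| \ge 1$.

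Next I would estimate the conjugates of $\alpha_k$. By \cref{lem:Liouville:coefficient_bound}, fix $\tau$ with $\varlimsup_{k\to\infty}\house{a(n_k)}^{1/n_k} < \tau < q$, so that $\house{a(n_j)} \le \tau^{n_j}$ for all large $j$. A geometric-series bound at the identity embedding gives
\[
0 < \alpha_k \ll u\,q^{n_k}(\tau/q)^{n_{k+1}} = u\,q^{n_k-n_{k+1}}\tau^{n_{k+1}}.
\]
For any non-identity embedding $\sigma\colon K' \hookrightarrow \mathbb{C}$, writing $\alpha_k = u q^{n_k}\xi - u\sum_{n \le n_k} a(n)\, q^{n_k-n}$ and bounding each factor by its house function yields, with $\Lambda \coloneqq \max(\house{q}, \tau) \ge 1$, an estimate of the form
\[
|\sigma(\alpha_k)| \ll n_k\,\Lambda^{n_k},
\]
where the implicit constant is independent of $k$.

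Finally, combining these bounds through the Liouville-style inequality $1 \le |\alpha_k|\prod_{\sigma \ne \mathrm{id}}|\sigma(\alpha_k)|$ and taking logarithms leads to
\[
\frac{n_{k+1}}{n_k}\log\frac{q}{\tau} \le \log q + (D-1)\log \Lambda + (D-1)\frac{\log n_k}{n_k} + O(1/n_k).
\]
The right-hand side stays bounded as $k \to \infty$, whereas \cref{lem:Liouville:lacunary} forces the left-hand side to be unbounded along some subsequence (because $\log(q/\tau) > 0$). This contradiction establishes the transcendence of $\xi$. The main obstacle is precisely the absence of a Pisot or Salem hypothesis on $q$: it introduces the factor $\Lambda^{n_k}$ in the conjugate bound, and only the lacunarity condition $\varlimsup_{k\to\infty} n_{k+1}/n_k = \infty$ can overcome it.
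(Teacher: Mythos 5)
Your proposal is correct and follows essentially the same route as the paper's proof: assume $\xi$ algebraic, show the truncated tail $u\,q^{n_k}(\xi-\sum_{n\le n_k}a(n)/q^n)$ is a nonzero algebraic integer, bound the identity-embedding value by $\ll q^{n_k-n_{k+1}}\tau^{n_{k+1}}$ and the other conjugates by $\ll \Lambda^{n_k}$ (up to a harmless polynomial factor), and let $\varlimsup n_{k+1}/n_k=\infty$ force the norm below $1$. The only differences (the notation $\tau$ vs.\ the paper's $q^{1-\delta}$, and the choice of truncation point, which differ by a factor of $q$) are cosmetic.
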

\begin{proof}
By \cref{lem:Liouville:coefficient_bound},
we can take $\delta>0$ such that
\begin{equation}
\label{lem:Liouville:Q_bound}
\house{a(n_{k})}\ll q^{(1-\delta)n_{k}}\ll\house{q}^{(1-\delta)n_{k}}
\quad\text{as $k\to\infty$}.
\end{equation}
Assume to the contrary that $\xi\coloneqq\xi(\bm{a})$ is an algebraic number
and let $\mathcal{O}_{L}$ be the ring of integers of $L\coloneqq K(\xi)$.
Then, we can write $\xi=\gamma/u$
with some $\gamma\in\mathcal{O}_{L}$ and $u\in\mathbb{Z}_{\ge1}$.
Let $N\ge1$ be a sufficiently large integer.
We then have
\begin{equation}
\label{lem:Liouville:OL}
0
<
u\xi_{1+n_{N}}(\bm{a})
=
q^{1+n_{N}}\gamma
-
u\sum_{k\le N}a(n_{k})q^{1+n_{N}-n_{k}}
\in
\mathcal{O}_{L}.
\end{equation}
Hence, by taking the norm with respect to the extension $L/\mathbb{Q}$, we obtain
\begin{equation}
\label{lem:Liouville:norm}
|N_{L/\mathbb{Q}}(u\xi_{1+n_{N}}(\bm{a}))|\ge1.
\end{equation}
By \cref{lem:Liouville:Q_bound}, we have  
\begin{equation}
\label{lem:Liouville:error}
u\xi_{1+n_{N}}(\bm{a})
=
uq^{1+n_{N}}\sum_{k>N}\frac{a(n_k)}{q^{n_k}}
\ll
uq^{1+n_{N}}\sum_{k>N}q^{-\delta n_{k}}
\ll
q^{n_{N}-\delta n_{N+1}}
\ll
\house{q}^{n_{N}}q^{-\delta n_{N+1}},
\end{equation}
where, hereafter in this paragraph, the implicit constants are independent of $N$.
For any embedding $\sigma\colon L\to\mathbb{C}$,
by \cref{lem:Liouville:Q_bound}
and \cref{lem:Liouville:OL}, we have 
\begin{equation}
\label{lem:Liouville:error_conjugate}
|(u\xi_{1+n_{N}}(\bm{a}))^{\sigma}|
\ll
\house{q}^{n_{N}}+\sum_{k\le N}\house{a(n_{k})}\house{q}^{n_{N}-n_{k}}
\ll
\house{q}^{n_{N}}
\biggl(1
+
\sum_{k\le N}\house{q}^{-\delta n_{k}}
\biggr)
\ll
\house{q}^{n_{N}}.
\end{equation}
Hence, by writing $D\coloneqq[L:\mathbb{Q}]$ and using
\cref{lem:Liouville:lacunary},
\cref{lem:Liouville:norm},
\cref{lem:Liouville:error}
and \cref{lem:Liouville:error_conjugate},
there are arbitrary large positive integer $N$ satisfying
$n_{N+1}/n_{N}\ge\frac{2D\log\house{q}}{\delta\log q}$ so
\[
1
\ll
\house{q}^{n_{N}}q^{-\delta n_{N+1}}
\cdot
(\house{q}^{n_{N}})^{D-1}
=
\house{q}^{Dn_{N}}q^{-\delta n_{N+1}}
=
q^{\frac{D\log\house{q}}{\log q}n_{N}-\delta n_{N+1}}
\le
q^{-\frac{\delta}{2}n_{N+1}},
\] 
which is a contradiction for large $N$. Thus, $\xi(\bm{a})$ is transcendental.
\end{proof}

\begin{proof}[Proof of \cref{cor:degree_ell_irrationality}]
By \cref{cor:degree_ell_irrationality:coefficient_bound},
we can take $\delta>0$ such that
\begin{equation}
\label{cor:degree_ell_irrationality:delta}
\house{a(n_{k})}\ll q^{(1-\delta)n_{k}}
\quad\text{as $k\to\infty$}.
\end{equation}
Also, note that $Q(n)$ is non-decreasing.
By \cref{lem:Liouville}, we may assume
\begin{equation}
\label{cor:degree_ell_irrationality:ratio_finite}
\varlimsup_{k\to\infty}
\frac{n_{k+1}}{n_{k}}
<
\infty.
\end{equation}
Suppose to the contrary that $\xi\coloneqq\xi(\bm{a})$
is algebraic over $\mathbb{Q}(q)$ of degree $\le\ell$.
Then there exist algebraic integers 
$u_0>0,u_1,\dots,u_\ell$ of $\mathbb{Q}(q)$ such that 
\begin{equation}
\label{cor:degree_ell_irrationality:algebraic_eq}
u_{0}\xi^{\ell}+u_{1}\xi^{\ell-1}+\cdots+u_{\ell}=0.
\end{equation}
For integers $j=1,2,\dots,\ell$, we put   
\begin{equation}\label{cor:degree_ell_irrationality:expansion}
\xi^{j}
=
\biggl(
\sum_{k=1}^{\infty}
\frac{a(n_{k})}{q^{n_{k}}}
\biggr)^{j}
=
\sum_{n=1}^{\infty}
\frac{b_{j}(n)}{q^{n}},
\end{equation}
where
\begin{equation}\label{cor:degree_ell_irrationality:def_bj}
b_{j}(n)
\coloneqq
\sum_{\substack{
(n_{k_1},n_{k_2},\dots, n_{k_j})\in\mathcal{N}_{\bm{a}}^j\\
n_{k_1}+n_{k_2}+\cdots+n_{k_j}=n
}}
a(n_{k_1})a(n_{k_2})\cdots a(n_{k_j}).
\end{equation}
Let $(\alpha(n))_{n=1}^{\infty}$ and $(\beta(n))_{n=1}^{\infty}$
be sequences of algebraic integers of $\mathbb{Q}(q)$ defined by   
\begin{equation}
\label{cor:degree_ell_irrationality:def_alpha_n}
\alpha(n)
\coloneqq
u_{0}b_{\ell}(n)
\quad\text{for $n\geq1$}
\end{equation}
and 
\begin{equation}
\label{cor:degree_ell_irrationality:def_beta_n}
\beta(n)
\coloneqq
u_{1}b_{\ell-1}(n)+\cdots+u_{\ell-1}b_{1}(n)
\quad\text{for $n\ge 1$},
\end{equation}
respectively. 
Then, by \eqref{cor:degree_ell_irrationality:algebraic_eq}
and \eqref{cor:degree_ell_irrationality:expansion}, we have   
\begin{equation}\label{cor:degree_ell_irrationality:derived_series}
\sum_{n=1}^{\infty}
\frac{\alpha(n)+\beta(n)}{q^{n}}
=
u_{0}\xi^{\ell}+u_{1}\xi^{\ell-1}+\cdots+u_{\ell-1}\xi
=
-u_{\ell}\in\mathbb{Q}(q). 
\end{equation}

Now we apply \cref{thm:criterion_irrationality_prepared} and deduce a contradiction. 
Clearly, by \cref{cor:degree_ell_irrationality:def_bj}
and \cref{cor:degree_ell_irrationality:def_alpha_n},
we have $\alpha(n)\geq0$ for $n\ge 1$ and $\#\mathcal{N}_{\bm{\alpha}}=\infty$;
in particular, $\house{\alpha(n)}\ge 1$ for infinitely many integers $n$. 
Moreover, by the definition of $Q(n)$ and \cref{cor:degree_ell_irrationality:delta}, we have
\begin{equation}
\label{cor:degree_ell_irrationality:bj_bound}
\begin{aligned}
\house{b_{j}(n)}
&\ll
\sum_{\substack{
(n_{k_1},n_{k_2},\dots, n_{k_j})\in\mathcal{N}_{\bm{a}}^j\\
n_{k_1}+n_{k_2}+\cdots+n_{k_j}=n
}}
\min\Bigl(q^{(1-\delta)(n_{k_1}+\cdots+n_{k_j})},
Q(n_{k_{1}})\cdots Q(n_{k_{j}})\Bigr)\\
&\ll
n^{j}
\min(q^{(1-\delta)n},Q(n)^{j})
\ll
\min(n^{\ell}q^{(1-\delta)n},Q(n)^{c_{1}})
\quad\text{as $n\to\infty$}
\end{aligned}
\end{equation}
for each $j=1,2,\dots,\ell$,
where, hereafter in this proof,
$c_{1},c_{2},\ldots$ are appropriate positive constants independent of $j$, $k$ and $n$.
Hence, by \cref{cor:degree_ell_irrationality:def_alpha_n},
\cref{cor:degree_ell_irrationality:def_beta_n} and
\cref{cor:degree_ell_irrationality:bj_bound}, we obtain 
\begin{equation}
\label{cor:degree_ell_irrationality:alpha_beta_bound}
\house{\alpha(n)},\house{\beta(n)}
\le
Q(n)^{c_{2}}
\quad\text{for $n\ge1$}.
\end{equation}
Also, by \cref{cor:degree_ell_irrationality:bj_bound}, we have
\begin{equation}
\label{cor:degree_ell_irrationality:pointwise}
\rho
\coloneqq
\varlimsup_{n\to\infty}
\max(\house{\alpha(n)},\house{\beta(n)})^{1/n}
\le
q^{1-\delta}<q.
\end{equation}
On the other hand, by \cref{cor:degree_ell_irrationality:cond},
there exists a strictly increasing sequence $(s_k)_{k=1}^{\infty}$ of positive integers
such that
\begin{equation}
\label{cor:degree_ell_irrationality:sk_cond_pre} 
\lim_{k\to\infty}
\frac{n_{s_k}}{{s_k}^{\ell}\log Q(n_{s_k})}
=\infty. 
\end{equation}  
Let $(x_k)_{k=1}^{\infty}$ be a sequence of positive integers defined by    
\[
x_k\coloneqq n_{s_k}
\quad\text{for $k\ge 1$}
\]
so that \cref{cor:degree_ell_irrationality:sk_cond_pre} takes the form
\begin{equation}
\label{cor:degree_ell_irrationality:sk_cond} 
\lim_{k\to\infty}
\frac{x_{k}}{s_{k}^{\ell}\log Q(x_{k})}
=\infty. 
\end{equation}  
Then, by \cref{cor:degree_ell_irrationality:alpha_beta_bound}, we obtain 
\begin{equation}
\label{cor:degree_ell_irrationality:S_house_bound}
S_{\bm{\alpha}}(x_k),S_{\bm{\beta}}(x_k)
\le
\sum_{n<x_k}(\house{\alpha(n)}+\house{\beta(n)})
=
O(x_{k}Q(x_{k})^{c_{2}})
=
O(Q(x_{k})^{c_{3}})
\quad\text{as $k\to\infty$}.
\end{equation}
Let $(y_k)_{k=1}^{\infty}$ and $(z_k)_{k=1}^{\infty}$ be sequences of real numbers defined by    
\begin{equation}
\label{cor:degree_ell_irrationality:def_yk}
y_{k}\coloneqq Q(x_{k})^{c_{3}}
\quad\text{for $k\ge 1$}
\end{equation}
and 
\begin{equation}
\label{cor:degree_ell_irrationality:def_zk}
z_{k}\coloneqq \lambda_{k}\log Q(x_{k})
\quad\text{for $k\ge 1$},
\end{equation}
where 
\begin{equation}
\label{cor:degree_ell_irrationality:def_lambdak}
\lambda_{k}
\coloneqq
\left(\frac{x_{k}}{s_{k}^{\ell}\log Q(x_{k})}\right)^{1/2}
\quad\text{for $k\ge1$}.
\end{equation}
Note that $\lambda_{k}\to\infty$ as $k\to\infty$ by \cref{cor:degree_ell_irrationality:sk_cond}.  
Then, by \cref{cor:degree_ell_irrationality:pointwise} and \cref{cor:degree_ell_irrationality:sk_cond}, we have
\begin{equation}
\label{cor:degree_ell_irrationality:yk_bound}
\varlimsup_{k\to\infty}y_{k}^{(d-1)/x_{k}}=1<q^{\delta}\le q/\rho, 
\end{equation}
and, by
\cref{cor:degree_ell_irrationality:def_bj},
\cref{cor:degree_ell_irrationality:sk_cond},
\cref{cor:degree_ell_irrationality:def_zk} and
\cref{cor:degree_ell_irrationality:def_lambdak}, we have 
\begin{equation}\label{cor:degree_ell_irrationality:sparsity_alpha}
\#\mathcal{N}_{\bm{\alpha}}(x_k)
\le 
\#
\{
(n_{k_1},\dots,n_{k_\ell})\in \mathcal{N}_{\bm{a}}^{\ell}
\mid
n_{k_1}+\cdots+n_{k_\ell}\le x_k=n_{s_k}
\}
\le
s_k^{\ell}
=
o({x_k}/{z_k})
\end{equation}
and  
\begin{equation}\label{cor:degree_ell_irrationality:sparsity_beta}
\#\mathcal{N}_{\bm{\beta}}(x_k)
\le
\sum_{j=1}^{\ell-1}
\#\{
(n_{k_1},\dots,n_{k_j})\in\mathcal{N}_{\bm{a}}^{j}
\mid
n_{k_1}+\cdots+n_{k_{j}}\le x_k=n_{s_k}
\}
\le
s_k^{\ell}
=
o({x_k}/{z_k})
\end{equation}
as $k\to\infty$. 
Moreover, by
\cref{cor:degree_ell_irrationality:def_yk} and
\cref{cor:degree_ell_irrationality:def_zk}, we have 
\[
q^{z_{k}}x_{k}/y_{k}^{d-1}
\ge
Q(x_{k})^{\lambda_k\log q-c_{3}(d-1)}
\]
and so, by \cref{cor:degree_ell_irrationality:S_house_bound}, 
\[
\sum_{n<x_k}\alpha(n),
\sum_{n<x_k}|\beta(n)|
=
O(Q(x_k)^{c_{3}})
=
o(q^{z_k}x_k/y_{k}^{d-1})
\quad\text{as $k\to\infty$}
\]
since $\lambda_k\to\infty$ as $k\to\infty$. 
Thus, by the above argument, we find that conditions 
\cref{thm:criterion_irrationality_original:x_infty,%
thm:criterion_irrationality_original:S_house_bound,%
thm:criterion_irrationality_original:sparsity}
in \cref{thm:criterion_irrationality_original} and
\cref{thm:criterion_irrationality_prepared:pointwise,%
thm:criterion_irrationality_prepared:yn_bound,%
thm:criterion_irrationality_prepared:moderate_growth}
in \cref{thm:criterion_irrationality_prepared} are satisfied.  

Finally, we check condition \cref{thm:criterion_irrationality_original:interlace}
in \cref{thm:criterion_irrationality_original}. 
We assume $\#\mathcal{N}_{\bm{\beta}}=\infty$, then $\ell\geq2$. 
Define 
\[
L\coloneqq 1+(\ell-1)n_1.
\] 
By \cref{cor:degree_ell_irrationality:ratio_finite},
there exists a constant $\Delta>1$ such that 
\begin{equation}
\label{cor:degree_ell_irrationality:Delta_cond}
n_{k+1}<\Delta n_k
\quad\text{for $k\ge 1$}. 
\end{equation}
Let $m$ and $m_+$ be consecutive integers in $\mathcal{N}_{\bm{\beta}}$.  
Since $\beta(m)\neq0$,
there exists an integer $j_{0}\in\{1,\ldots,\ell-1\}$ such that $b_{j_0}(m)\neq0$, 
and so there exists $(n_{k_1},n_{k_2},\dots,n_{k_{j_0}})\in\mathcal{N}_{\bm{a}}^{j_0}$
such that 
\begin{equation}
\label{cor:degree_ell_irrationality:beta_support_eq}
n_{k_1}+n_{k_2}+\cdots+n_{k_{j_0}}=m.
\end{equation}
Then, for a fixed real number $\mu\ge L>(\ell-1)n_1$ with $m+\Delta \mu<m_+$,
there exist consecutive integers $n_{\kappa},n_{\kappa+1}$ in $\mathcal{N}_{\bm{a}}$
such that 
\begin{equation}\label{cor:degree_ell_irrationality:kappa_cond}
(\ell-j_{0})n_{\kappa}<\mu\le (\ell-j_{0})n_{\kappa+1}.
\end{equation}
Let $h\coloneqq m+(\ell-j_{0})n_{\kappa+1}$.
By \cref{cor:degree_ell_irrationality:Delta_cond} and
\cref{cor:degree_ell_irrationality:kappa_cond}, we then have
\[
m+\mu
\le
h
<
m+(\ell-j_{0})\Delta n_\kappa
<
m+\Delta\mu
\]
and, by \cref{cor:degree_ell_irrationality:beta_support_eq},
the integer $h$ is expressed as the sum of $\ell$ integers in $\mathcal{N}_{\bm{a}}$: 
\[
h
=
n_{k_1}+n_{k_2}+\cdots+n_{k_{j_0}}
+
\underbrace{n_{\kappa+1}+\cdots+n_{\kappa+1}}_{\text{$\ell-j_{0}$ terms}}.
\]
Therefore, by
\cref{cor:degree_ell_irrationality:def_bj} and
\cref{cor:degree_ell_irrationality:def_alpha_n},
we have $\alpha(h)>0$ and so $h\in\mathcal{N}_{\bm{\alpha}}\cap[m+\mu,m+\Delta\mu)$. 

Thus, applying \cref{thm:criterion_irrationality_prepared},
we find that the infinite series \cref{cor:degree_ell_irrationality:derived_series}
does not belong to $\mathbb{Q}(q)$. 
This is a contradiction. 
\end{proof}

\section{Proofs of \cref{prethm:ErdosLemma4prime} and \cref{cor:chimera}}
\label{sec:proof_corollary}
We first complete the proof of Erd\H{o}s's \cref{prethm:ErdosLemma4prime}.
\begin{proof}[Proof of \cref{prethm:ErdosLemma4prime}]
Let $t$, $\bm{a}$, $\bm{b}$ and $(x_n)_{n=1}^{\infty}$ as in \cref{prethm:ErdosLemma4prime}.
We apply \cref{thm:criterion_irrationality_rational}.
Define sequences $(u_n)_{n=1}^{\infty}$ and $(z_n)_{n=1}^{\infty}$ of real numbers $\ge1$ by
\begin{equation}
\label{prethm:ErdosLemma4prime:def_un}
u_{n}
\coloneqq
\frac{x_{n}}{\max(1,\#\mathcal{N}_{\bm{a}}(x_{n}),\#\mathcal{N}_{\bm{b}}(x_{n}))}
\quad\text{for $n\ge 1$}
\end{equation}
and
\begin{equation}\label{prethm:ErdosLemma4prime:def_zn}
z_{n}\coloneqq u_{n}^{\frac{1}{2}}
\quad\text{for $n\ge 1$}.
\end{equation}
Clearly, $z_n,u_n\to\infty$ as $n\to\infty$
by condition \cref{prethm:ErdosLemma4prime:sparsity} in \cref{prethm:ErdosLemma4prime}. 
Hence, by \cref{prethm:ErdosLemma4prime:def_un} and \cref{prethm:ErdosLemma4prime:def_zn},
we have 
\begin{equation}\label{prethm:ErdosLemma4prime:strong_sparsity}
\#\mathcal{N}_{\bm{a}}(x_{n}),
\#\mathcal{N}_{\bm{b}}(x_{n})
\le
{x_{n}}/{u_{n}}
=
o({x_{n}}/{z_{n}})
\quad\text{as $n\to\infty$}.
\end{equation}
Moreover, by condition~\cref{prethm:ErdosLemma4prime:moderate_growth}
in \cref{prethm:ErdosLemma4prime}, we have
\begin{equation}
\label{prethm:ErdosLemma4prime:moderate_growth_modified}
S_{\bm{a}}(x_{n}),
S_{\bm{b}}(x_{n})
=
O(x_{n})
=
o(t^{z_{n}}x_{n})
\quad\text{as $n\to\infty$}.
\end{equation}
Thus, all conditions in \cref{thm:criterion_irrationality_rational} are satisfied from
\cref{prethm:ErdosLemma4prime:strong_sparsity},
\cref{prethm:ErdosLemma4prime:moderate_growth_modified}
and the assumptions of \cref{prethm:ErdosLemma4prime},
and so \cref{prethm:ErdosLemma4prime} follows
from \cref{thm:criterion_irrationality_rational}. 
\end{proof}

Next we show \cref{cor:chimera}. 
Let $\sigma(n)$ and $\phi(n)$ be the sum-of-divisors function and Euler's totient function,
respectively, as defined in \cref{sec:introduction}. 
We need some preparation.
\begin{lemma}[cf. {\cite[Theorem~2.9, p.~55]{MV:text}}] 
\label{lem:phi_sigma_bound}
We have 
\[
\phi(n)
\ge
\frac{n}{\log\log n}
\biggl( 
e^{-\gamma}+O\biggl(\frac{1}{\log\log n}\biggr)
\biggr)
\quad\text{as $n\to\infty$},
\]
where $\gamma=0.577215\dots$ is the Euler--Mascheroni constant. 
\end{lemma}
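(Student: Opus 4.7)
The plan is to follow the classical route of combining Mertens' third theorem with a split of the prime factorization of $n$ by size. Starting from $\phi(n)/n = \prod_{p \mid n}(1-1/p)$, I would introduce a parameter $y = y(n)$, to be chosen later as $y = \log n$, and decompose the product according to whether $p \le y$ or $p > y$:
\[
\frac{\phi(n)}{n}
=
\prod_{\substack{p \mid n \\ p \le y}}\left(1 - \frac{1}{p}\right)
\prod_{\substack{p \mid n \\ p > y}}\left(1 - \frac{1}{p}\right).
\]

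For the small-prime factor, since each $(1-1/p) < 1$, including extra primes only decreases the product. Hence one can drop the divisibility constraint and invoke Mertens' third theorem:
\[
\prod_{\substack{p \mid n \\ p \le y}}\left(1 - \frac{1}{p}\right)
\ge
\prod_{p \le y}\left(1 - \frac{1}{p}\right)
= \frac{e^{-\gamma}}{\log y}\left(1 + O\left(\frac{1}{\log y}\right)\right).
\]
For the large-prime factor, the elementary inequality $\prod_i (1 - x_i) \ge 1 - \sum_i x_i$ for $x_i \in [0,1]$ reduces matters to estimating $\sum_{p \mid n,\, p > y} 1/p$. If $n$ has $K$ distinct prime divisors exceeding $y$, then $y^K < \prod_{p \mid n,\, p > y} p \le n$, so $K < \log n / \log y$. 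Since each such $1/p < 1/y$, the sum is $O(\log n/(y \log y))$.

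Taking $y = \log n$ balances the two error terms: $\log y = \log\log n$, the Mertens contribution carries an error $O(1/\log\log n)$, and the large-prime contribution contributes a factor $1 - O(1/\log\log n)$. Multiplying these estimates together and clearing the shared $(1 + O(1/\log\log n))$ factor yields the claimed bound. No serious obstacle is expected: the result is classical (essentially due to Landau), and the argument requires only Mertens' theorem, monotonicity of the Euler product, and the elementary upper bound on the number of large prime divisors of $n$. The only mildly delicate point is choosing $y$ so that the two error terms match in order of magnitude, which is achieved cleanly by $y = \log n$.
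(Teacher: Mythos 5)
The paper itself does not prove this lemma; it simply cites it as Theorem~2.9 of Montgomery--Vaughan, where essentially the same argument you outline is given. Your proof is correct: splitting the Euler product $\phi(n)/n = \prod_{p\mid n}(1-1/p)$ at $y = \log n$, using monotonicity (each factor lies in $(0,1)$) to reduce the small-prime part to the full Mertens product, and bounding the number of prime divisors exceeding $y$ by $\log n/\log y$ to control the tail, all combine to give the stated bound.
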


Let $x>1$ be large and $n\ge 3$ be an integer satisfying $\phi(n)\le x$. 
Then, by \cref{lem:phi_sigma_bound}, we have
\[
n
\ll
\phi(n)\log\log n
\ll
x\log\log n
\ll
x\sqrt{n},
\]
where, in this section, the implicit constants are independent of $n$ and $x$.
Hence, $\sqrt{n}\ll x$, so
\begin{equation}\label{eqphi1}
n\ll x\log\log x.
\end{equation}
The bound \cref{eqphi1} is also valid for $n=1,2$. 
We will use \cref{eqphi1} for the proof of \cref{cor:chimera}. 

For an arithmetic function $f\colon\mathbb{Z}_{\ge1}\to\mathbb{Z}_{\ge1}$ and $x>1$, we define 
\begin{equation}
\label{def:V}
\mathcal{S}_{f}
\coloneqq
\{f(n)\mid n\ge 1\}
\quad\text{and}\quad 
\mathcal{S}_{f}(x)
\coloneqq
\mathcal{S}_{f}\cap[1,x).
\end{equation}
For example, $\mathcal{S}_{\phi}$ denotes the set of values of the Euler's totient function
and $\mathcal{S}_{\phi}(10)=\{1,2,4,6,8\}$.

For the next result on the distribution of the image of $\phi$ and $\sigma$,
see e.g.\ Theorem on p.~264 and the last paragraph of Section~5
of Maier--Pomerance~\cite{MaierPomerance:PhiImage}.
For its improvement, see Ford~\cite{Ford:PhiImage}.

\begin{lemma}[Maier--Pomerance~\cite{MaierPomerance:PhiImage}]
\label{lem:V_f_bound}
As $x\to\infty$, we have
\[
\#\mathcal{S}_{\phi}(x),
\#\mathcal{S}_{\sigma}(x)
\le
\frac{x}{\log x}\exp\biggl(C(\log\log\log x)^{2}\biggr)
\]
with some absolute constant $C>0$.
\end{lemma}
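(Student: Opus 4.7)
The plan is to follow the Erdős--Pomerance--Maier strategy for counting values of multiplicative functions: every $m\in\mathcal{S}_\phi(x)\cup\mathcal{S}_\sigma(x)$, apart from a sparse exceptional set, inherits many distinct prime factors from the product formulas for $\phi$ and $\sigma$, and the set of integers with many prime factors is thin by Hardy--Ramanujan.

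\textbf{Step 1: Preimage range.} Any $m\in\mathcal{S}_\phi(x)$ equals $\phi(n)$ for some $n\ll x\log\log x$ by \cref{eqphi1}, while any $m\in\mathcal{S}_\sigma(x)$ equals $\sigma(n)$ for some $n\le x$ since $\sigma(n)\ge n$. Thus it suffices to control the image of $\phi$ (resp.\ $\sigma$) restricted to integers in these ranges.

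\textbf{Step 2: Many prime factors of a typical image.} For $n$ in the preimage range outside a set of size $o(x/\log x)$, the Hardy--Ramanujan normal-order theorem gives $\omega(n)\sim\log\log x$. The product formulas
\[
\phi(n)=\prod_{p^{a}\|n}p^{a-1}(p-1)
\quad\text{and}\quad
\sigma(n)=\prod_{p^{a}\|n}(1+p+\cdots+p^{a})
\]
force each shifted prime $p-1$ (respectively each cyclotomic-like factor $1+p+\cdots+p^a$) for $p\mid n$ to divide $m$. A sieve argument on primes in arithmetic progressions shows that, after discarding a further sparse exceptional set of $n$, the prime divisors of these shifted values pool to give $\omega(m)\gg(\log\log x)^{2}$ distinct prime divisors of $m$.

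\textbf{Step 3: Counting and obstacle.} Given Step 2, it suffices to bound the count of $m\le x$ with $\omega(m)\ge k$ for $k\asymp(\log\log x)^{2}$. The Hardy--Ramanujan inequality gives
\[
\#\{m\le x:\omega(m)\ge k\}
\ll
\frac{x}{\log x}\cdot\frac{(\log\log x+O(1))^{k-1}}{(k-1)!},
\]
and choosing $k=\lfloor A(\log\log x)^{2}\rfloor$ for an appropriate constant $A>0$, Stirling's formula yields the desired bound $\frac{x}{\log x}\exp(C(\log\log\log x)^{2})$ for a suitable absolute $C>0$. The main obstacle is Step 2: controlling the distribution of prime divisors of the shifted values $p\pm1$ and $1+p+\cdots+p^a$ across $p\mid n$ for a typical $n$, while keeping the exceptional set small after projection by $\phi$ or $\sigma$, requires the delicate sieve bookkeeping that is the technical heart of Maier--Pomerance. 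Balancing the size of the exceptional set against the choice of $k$ in Hardy--Ramanujan is what determines the exponent $(\log\log\log x)^{2}$ and the constant $C$.
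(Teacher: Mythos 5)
This lemma is not proved in the paper at all: it is imported verbatim from Maier--Pomerance (the text points to the Theorem on p.~264 and the last paragraph of Section~5 of \cite{MaierPomerance:PhiImage}, with Ford's refinement noted), so the only ``proof'' the paper offers is the citation. Your proposal instead sketches an internal proof, and as a standalone argument it has a genuine gap that you yourself flag: Step~2, the assertion that outside a suitably small exceptional set every value $m=\phi(n)$ (or $\sigma(n)$) with $n$ in the preimage range acquires $\gg(\log\log x)^2$ distinct prime factors, is exactly the technical core of Maier--Pomerance and is not something a generic ``sieve argument on primes in arithmetic progressions'' delivers. The quantitative issue is that the exceptional set must be counted at the level $\frac{x}{\log x}\exp\bigl(C(\log\log\log x)^{2}\bigr)$, whereas standard normal-order statements (Hardy--Ramanujan, Erd\H{o}s's theorem on $\omega(p-1)$, Tur\'an-type variance bounds) only give exceptional sets of size $x/(\log x)^{c}$ with some small $c<1$ --- far too large. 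Controlling the image of the exceptional preimages this sharply requires the analysis of chains of shifted primes and the extremal structure of preimages that occupies most of \cite{MaierPomerance:PhiImage}; the naive version of your outline is essentially Erd\H{o}s's older argument and only yields $x/(\log x)^{1-\epsilon}$.

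There is also a quantitative mismatch in Step~3. With $k\asymp(\log\log x)^{2}$ the Hardy--Ramanujan bound $\frac{x}{\log x}\cdot\frac{(\log\log x+O(1))^{k-1}}{(k-1)!}$ is of size roughly $x\exp\bigl(-c(\log\log x)^{2}\log\log\log x\bigr)$, i.e.\ negligible compared with $\frac{x}{\log x}\exp\bigl(C(\log\log\log x)^{2}\bigr)$; so this term cannot be what ``determines the exponent $(\log\log\log x)^{2}$''. In Maier--Pomerance (and more precisely in Ford's refinement) that exponent arises from the extremal counting of preimages with abnormally few prime factors along chains $p_{i+1}\mid p_i-1$, not from balancing Hardy--Ramanujan against an exceptional set. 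In short: the outline records the right philosophy, but the step carrying all the difficulty is only asserted, and the claimed balancing would not in fact produce the stated bound; in the context of this paper the correct and intended route is simply to quote \cite{MaierPomerance:PhiImage}, as the authors do.
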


\begin{proof}[Proof of \cref{cor:chimera}]
Let $t$ and $\bm{f}$ be as in \cref{cor:chimera} and $g$ be either $\phi$ or $\sigma$. 
Then, we have 
\begin{equation}
\label{cor:chimera:rewrite_series}
\sum_{m=1}^{\infty}
\frac{f(m)}{t^{g(m)}}
=
\sum_{n=1}^{\infty}
\sum_{\substack{m\geq1\\g(m)=n}}
\frac{f(m)}{t^{g(m)}}
=
\sum_{n=1}^{\infty}
\frac{a(n)}{t^{n}},
\end{equation}
where 
\begin{equation}
\label{cor:chimera:def_a}
a(n)
\coloneqq
\sum_{\substack{
m\ge 1\\
g(m)=n
}}
f(m)
\quad\text{for $n\ge1$}.
\end{equation}
We show the irrationality of \cref{cor:chimera:rewrite_series}
by applying \cref{thm:criterion_irrationality_rational}
to the above sequence $(a(n))_{n=1}^{\infty}$ and $b(n)\coloneqq 0$ for $n\ge 1$. 
Since $f(n)\ge 0$ for $n\ge 1$ and $\#\mathcal{N}_{\bm{f}}=\infty$, 
we have $a(n)\ge 0$ for $n\ge 1$ and $\#\mathcal{N}_{\bm{a}}=\infty$. 
Let $\delta>0$ be as in \cref{cor:chimera} and
\[
\mu\coloneqq\frac{2+\delta}{\log t}.
\] 
Define two sequences $(x_n)_{n=1}^{\infty}$ and $(z_n)_{n=1}^{\infty}$, respectively, by 
\begin{equation}
\label{cor:chimera:H_def}
x_n\coloneqq n
\quad\text{for $n\ge 1$}
\quad\text{and}\quad  
z_n
\coloneqq
\left\{
\begin{array}{cl}
1&\text{for $1\le n<e^{t}$},\\
\mu\log\log n&\text{for $n>e^{t}$}. 
\end{array}
\right.
\end{equation}
Let $n$ be a large integer and $m\geq1$ be an integer satisfying $g(m)\le n$. 
If $g=\sigma$, then $m\le\sigma(m)\le n$. 
In the case of $g=\phi$, we have \cref{eqphi1}.
Thus, for both of the cases $g=\sigma$ and $g=\phi$, we have 
\begin{equation}\label{eqphi2}
m<n\log n.
\end{equation}
Hence, by using
\cref{cor:chimera:def_a},
\cref{eqphi2} and the assumption
\cref{cor:chimera:cond}, we obtain 
\[
S_{\bm{a}}(x_n)
=
\sum_{m< x_n}
a(m)
=
\sum_{\substack{
m\ge1\\
g(m)\le x_n
}}
f(m)
\le
\sum_{m\le n\log n}
f(m)
=
O(n(\log n)^{1+\delta})
=
o(t^{z_{n}}x_n)
\quad\text{as $n\to\infty$},
\]
and $\varlimsup_{n\to\infty}\,a(n)^{1/n}=1$ since $a(n)\ge1$ for infinitely many integers $n$ and 
\[
a(n)
\le
S_{\bm{a}}(x_{n+1})
=
O(n(\log n)^{1+\delta})
\quad\text{as $n\to\infty$}.
\]
Moreover, by \cref{cor:chimera:def_a} and \cref{lem:V_f_bound}, we have 
\begin{align}
\#\mathcal{N}_{\bm{a}}(x_n)
=
\#\{
m\in[1,x_{n})\cap\mathbb{Z}
\mid
a(m)>0
\}
\le
\#\mathcal{S}_{g}(n)
&\le
\frac{n}{\log n}\exp\bigl(C(\log\log\log n)^{2}\bigr)\\
&=
o\bigl({x_{n}}/{z_n}) \quad\text{as $n\to\infty$}.
\end{align}
Thus, applying \cref{thm:criterion_irrationality_rational} yields \cref{cor:chimera}. 
\end{proof}

\bibliographystyle{amsplain} 
\bibliography{SparsePowerSeries}

\end{document}